\newcommand{\N}{\mathbb{N}}
\newcommand{\R}{\mathbb{R}}
\newtheorem{prop}{Proposition}[section]
\newtheorem{cor}[prop]{Corollary}
\newtheorem{teo}[prop]{Theorem}
\newtheorem{lema}[prop]{Lemma}
\newtheorem{conj}[prop]{Conjecture}
\theoremstyle{definition}
\newtheorem*{ex}{Example}
\newtheorem{defi}[prop]{Definition}
\newtheorem*{obs}{Remark}
\newtheorem*{acknowledgements}{Acknowledgements}
\title{Constructing Bowditch boundaries of some relatively hyperbolic groups that are homeomorphic to the $n$-dimensional Sierpi\'nski carpet}
\author{Lucas H. R. de Souza}
\begin{document}

\DeclareGraphicsExtensions{.pdf,.jpg,.mps,.png,}

\maketitle

\def\eod{\hfill$\square$}

\begin{abstract}In this paper we prove that if some relatively hyperbolic groups have Bowditch boundary homeomorphic to the $n$-sphere, then they are also relatively hyperbolic with respect to another set of parabolic subgroups and its Bowditch boundary is homeomorphic to the $n-1$-dimensional Sierpi\'nski carpet.
\end{abstract}

\let\thefootnote\relax\footnote{Mathematics Subject Classification (2010). Primary: 20F65, 54D35; Secondary: 57M07, 54E45, 57S30.}
\let\thefootnote\relax\footnote{Keywords: Blowup, Bowditch boundary, relatively hyperbolic action, sphere, Sierpi\'nski Carpet.}

\tableofcontents

\section*{Introduction}

Tshishiku and Walsh \cite{TW} (with a correction in \cite{TW2}) showed the following topological characterization of the $1$-dimensional Sierpi\'nski Carpet: if we take a countable dense subset of the $2$-sphere and, for each point $p$ on this set, we take a quotient of the closed $2$-disk to the $2$-sphere that collapses the boundary to the point $p$, then the inverse limit of this family of maps is homeomorphic to the $1$-dimensional Sierpi\'nski Carpet. Later, we generalized their result \cite{So4} for $n-1$-dimensional Sierpi\'nski Carpets, with $n \neq 4$. More classical characterizations of the Sierpi\'nski Carpet are given in \cite{Wh} and \cite{Ca} (where the same problem with the dimension appears).

In this paper we use such characterization of the Sierpi\'nski Carpet in order to show that for some classes of relatively hyperbolic pairs of groups with their Bowditch boundaries homeomorphic to the $n$-sphere, we can refine their peripheral structures in a way that their new Bowditch boundaries are homeomorphic to the $n-1$-dimensional Sierpi\'nski Carpet. Precisely, we prove the following:

\

$\!\!\!\!\!\!\!\!\!\!\!\!$ \textbf{Theorem \ref{sierpinskihyp}} Let $(G,\mathcal{P})$ be a relatively hyperbolic pair with Bowditch boundary homeomorphic to $S^{n}$ and $\mathcal{Q}$ a proper subset of $\mathcal{P}$ such that every group in $\mathcal{P} - \mathcal{Q}$ is hyperbolic. Suppose that one of the following holds:

\begin{enumerate}
    \item $n = 1$ or $2$.
    \item $n = 3$ and every group in $\mathcal{P}-\mathcal{Q}$ is virtually torsion-free.
    \item $n = 4$,  the topological characterization of the $3$-dimensional Sierpi\'nski Carpet as an inverse limit of $4$-balls holds, every group $Q \in \mathcal{P} - \mathcal{Q}$ is torsion-free, it acts properly discontinuously, cocompactly and by isometries on a simply connected manifold $M_{Q}$ with negative curvature and the Borel Conjecture holds for $Q$.
    \item $n \geqslant 5$ and every group in $\mathcal{P} - \mathcal{Q}$ has its hyperbolic boundary homeomorphic to the $n-1$-sphere.
\end{enumerate}

Then the Bowditch boundary of $G$ with respect to $\mathcal{Q}$ is homeomorphic to the $n-1$-dimensional Sierpi\'nski carpet.

\

The methods used here are similar to the one used by Tshishiku and Walsh to show that the Dahmani boundary of a relatively hyperbolic $(G,\mathcal{P})$ is homeomorphic to a $1$-dimensional Sierpi\'nski carpet, given that its Bowditch boundary is homeomorphic to the $2$-sphere (Theorem 1.1 of \cite{TW}). In  fact, if the Dahmani boundary is a blowup space (see \textbf{Section 3.1.2} for the definition), then their theorem has ours as a consequence, in the case of dimension $2$.

Let $p \in S^{n}$ be a bounded parabolic point. Its stabilizer $H$ acts on $S^{n}-\{p\}$ properly discontinuously and cocompactly and then we are able to have a glueing between $S^{n}-\{p\}$ and the hyperbolic boundary of $H$. To prove the theorem, it is necessary to show that this glueing is homeomorphic to the closed $n$-ball (and then we can use the characterization of the Sierpi\'nski Carpet described above). For different dimensions, we use different tools to show it.

It is not much clear what really happens in dimension $4$, as it is also not clear if the characterization of the $3$-dimensional Sierpi\'nski Carpet holds. Furthermore, as far as I know, the Borel Conjecture is not well understood in dimension $4$, unlike any other dimension.

As consequences of the theorem above, we have the following:

\begin{enumerate}
    \item Another proof of the well known fact that if $(G,\mathcal{P})$ is a relatively hyperbolic pair such that $\mathcal{P}$ is not empty and its Bowditch boundary is homeomorphic to $S^{1}$, then $G$ is virtually free (\textbf{Corollary \ref{circleimpliesvirtuallyfree}}).
    \item Let $(G,\mathcal{P})$ be a relatively hyperbolic pair with Bowditch boundary homeomorphic to $S^{2}$ and such that $\mathcal{P}$ is not empty and every group in $\mathcal{P}$ is hyperbolic. If the Kapovich-Kleiner Conjecture holds, then $G$ is virtually the fundamental group of a compact hyperbolic $3$-manifold with non-empty totally geodesic boundary (\textbf{Corollary \ref{conjecturedependent}}).
\end{enumerate}

The Kapovich-Kleiner Conjecture says that if a hyperbolic group G has its hyperbolic boundary homeomorphic to the $1$-dimensional Sierpinski carpet, then $G$ is virtually the fundamental group of a compact hyperbolic $3$-manifold with non-empty totally geodesic boundary \cite{KK}.

\begin{acknowledgements}This paper contains part of my PhD thesis. It was written under the advisorship of Victor Gerasimov, to whom I am grateful. I also would like to thank Christopher Hruska, for his corrections on the proof of the main theorem.
\end{acknowledgements}

\section{Preliminaries}

\subsection{The perspectivity property}

\subsubsection{Artin-Wraith glueings}

This theory is covered in \cite{So}. We need to use it in the following sections just as a language to define some compactifications of spaces.

\begin{defi}Let $X$ and $Y$ be topological spaces and an application $f: Closed(X) \rightarrow Closed (Y)$ such that $\forall A,B \in Closed(X), \ f(A\cup B) = f(A)\cup f(B)$ and $f(\emptyset) = \emptyset$ (we will say that such map is admissible). We will give a topology for $X\dot{\cup} Y$. Let's declare as a closed set $A \subseteq X\dot{\cup} Y$ if $A \cap X \in Closed(X), \ A \cap Y \in Closed(Y)$ and $f(A\cap X) \subseteq A$. Therefore, let's denote by $\tau_{f}$ the set of the complements of this closed sets and $X+_{f}Y = (X\dot{\cup} Y,\tau_{f})$.
\end{defi}

Every space $X$ that is a union of disjoint subspaces $A$ and $B$, where $A$ is open on $X$, can be recovered uniquely as an Artin-Wraith glueing of $A$ and $B$.

\begin{defi}Let $X+_{f} \! Y$ and $Z+_{h} \! W$ be topological spaces and continuous maps $\psi: X \rightarrow Z$ and $\phi: Y \rightarrow W$. We define $\psi + \phi: X+_{f}Y \rightarrow Z+_{h}W$ by $(\psi + \phi)(x) = \psi(x)$ if $x \in X$ and $\phi(x)$ if $x \in Y$. If $G$ is a group, $\psi: G \curvearrowright X$ and $\phi: G \curvearrowright Y$, then we define $\psi+\phi: G \curvearrowright X+_{f}Y$ by $(\psi+\phi)(g,x) = \psi(g,x)$ if $x \in X$ and $\phi(g,x)$ if $x \in Y$.
\end{defi}

\begin{defi}Let $X+_{f}W$, $Y$ and $Z$ be topological spaces, $\pi: Y \rightarrow X$ and $\varpi: Z \rightarrow W$ be two continuous maps. We define the pullback of $f$ with respect to $\pi$ and $\varpi$ by $f^{\ast}(A) = \varpi^{-1}(f(Cl_{X}\pi(A)))$.
\end{defi}

\begin{cor}\label{continuousidentity}Let $X+_{f}Y$, $X+_{g}Y$ be topological spaces. Then, the map $id: X+_{f}Y \rightarrow X+_{g}Y$ is continuous if and only if $\forall A \in Closed (X)$, $f(A) \subseteq g(A)$.
\eod\end{cor}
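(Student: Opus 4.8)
The plan is to unwind the definition of the glueing topology and reduce continuity of the identity to the stated containment of the admissible maps. Since $id$ is a bijection on the underlying set $X \dot{\cup} Y$, its continuity as a map $X+_{f}Y \to X+_{g}Y$ is equivalent to the statement that every $C \subseteq X \dot{\cup} Y$ that is closed in $X+_{g}Y$ is also closed in $X+_{f}Y$. By definition, $C$ is closed in $X+_{g}Y$ precisely when $C \cap X \in Closed(X)$, $C \cap Y \in Closed(Y)$, and $g(C \cap X) \subseteq C$; and the analogous three conditions with $f$ in place of $g$ characterize closedness in $X+_{f}Y$. Since the first two conditions do not involve the admissible map, the only thing left to compare is the third condition.

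For the backward implication, I would assume $f(A) \subseteq g(A)$ for all $A \in Closed(X)$ and take any $C$ closed in $X+_{g}Y$. Setting $A = C \cap X$, which is closed in $X$, the hypothesis gives $f(C \cap X) \subseteq g(C \cap X)$, while the third closedness condition for $X+_{g}Y$ gives $g(C \cap X) \subseteq C$. Chaining these two inclusions yields $f(C \cap X) \subseteq C$, so $C$ is closed in $X+_{f}Y$; hence $id$ is continuous.

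For the forward implication, I would assume $id$ is continuous and fix $A \in Closed(X)$, the goal being $f(A) \subseteq g(A)$. The key step, and the only point requiring a small idea, is to build the right test set: I would take $C = A \cup g(A) \subseteq X \dot{\cup} Y$. Since $g(A) \in Closed(Y)$ and $g(A) \subseteq Y$, we have $C \cap X = A \in Closed(X)$ and $C \cap Y = g(A) \in Closed(Y)$, and moreover $g(C \cap X) = g(A) \subseteq C$, so $C$ is closed in $X+_{g}Y$. Continuity of $id$ then forces $C$ to be closed in $X+_{f}Y$, which gives $f(A) = f(C \cap X) \subseteq C$. Finally, since $f(A) \subseteq Y$ and $C \cap Y = g(A)$, this yields $f(A) \subseteq g(A)$, as desired. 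The main (and rather modest) obstacle is simply recognizing that $A \cup g(A)$ is the smallest $g$-closed set lying over $A$, and hence exactly the witness that converts continuity into the pointwise containment $f(A) \subseteq g(A)$.
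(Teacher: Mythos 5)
Your proof is correct, and it is exactly the argument the paper leaves implicit: the corollary is stated with its proof omitted as immediate from the definition of the glueing topology, and your unwinding of closedness in $X+_{f}Y$ versus $X+_{g}Y$, together with the test set $A \cup g(A)$ for the forward direction, is the natural way to fill in that omission. Both directions are handled correctly, including the observation that $f(A) \subseteq Y$ so that $f(A) \subseteq C$ forces $f(A) \subseteq C \cap Y = g(A)$.
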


\subsubsection{Perspective compactifications}

Most of the following are covered in \cite{So2}.

\begin{defi}\label{pielambda}Let $G$ be a group, $X$ and $Y$ Hausdorff topological spaces with $X$ locally compact and $Y$ compact, $L: G \curvearrowright G$ the left multiplication action, $\varphi: G \curvearrowright X$ a  properly discontinuous cocompact action, $\psi: G \curvearrowright Y$ an action by homeomorphisms and $K \subseteq X$ a compact such that $\varphi(G,K) = X$. Define $\Pi_{K}: Closed(X) \rightarrow  Closed(G)$ as $\Pi_{K}(S) = \{g\in G: \varphi(g,K)\cap S \neq \emptyset\}$ and $\Lambda_{K}: Closed(G) \rightarrow Closed(X)$ as $\Lambda_{K}(F) = \varphi(F,K)$.
\end{defi}

\begin{defi}Let $G$ be a group, $X$ a Hausdorff locally compact space and $\varphi: G \curvearrowright X$ a properly discontinuous and cocompact action. We say that a Hausdorff compactification $\bar{X}$ of $X$ is perspective if $\varphi$ extends continuously to an action on $\bar{X}$ and $\forall u \in \mathcal{U}, \ \forall K \subseteq X$ compact, $\#\{g \in G: \varphi(g,K)\notin Small(u)\} < \aleph_{0}$, where $\mathcal{U}$ is the only uniform structure compatible with the topology of $\bar{X}$ (given by Theorem 1, $\S 4$, Chapter II of \cite{Bou}). If $\bar{X}$ is metrizable, this condition is equivalent to that $\forall \varepsilon > 0$, $\forall K \subseteq X$ compact, $\#\{g \in G: diam \ \varphi(g,K) > \varepsilon\} < \aleph_{0}$ for any choice of a metric compatible with the topology of $\bar{X}$.

We denote by $EPers_{0}(\varphi)$ the category whose objects are perspective compactifications of $X$ and morphisms are continuous equivariant maps that are the identity when restricted to $X$. If $G = X$ and the action is the left multiplication action then we use the notation $EPers_{0}(G)$ for such category.
\end{defi}

\begin{prop}(Theorem 3.2 of \cite{So}) Let $K \subseteq X$ be a fundamental domain of $\varphi$. The functor $\Pi: EPers(G) \rightarrow EPers(\varphi)$ that sends $G+_{\partial}Y$ to  $X+_{\partial_{\Pi_{K}}}Y, \ L+\psi:G \curvearrowright G+_{\partial}Y$ to $\varphi+\psi: G \curvearrowright X+_{\partial_{\Pi_{K}}}Y$ and $id+\phi: G+_{\partial_{1}}Y_{1}\rightarrow G+_{\partial_{2}}Y_{2}$ to $id+\phi: X+_{(\partial_{1})_{ \Pi_{K}}} Y_{1}\rightarrow X+_{(\partial_{2})_{\Pi_{K}}}Y_{2}$, is a isomorphism of categories.

Furthermore, its inverse is the functor $\Lambda: EPers(\varphi) \rightarrow EPers(G)$ that sends $X+_{f}Y$ to $G+_{f_{\Lambda_{K}}}Y, \ \varphi+\psi:G \curvearrowright X+_{f}Y$ to $id+\psi: G \curvearrowright G+_{f_{\Lambda_{K}}}Y$ and $id+\phi: X+_{f_{1}}Y_{1}\rightarrow X+_{f_{2}}Y_{2}$ to $id+\phi: G+_{(f_{1})_{\Lambda_{K}}}Y_{1}\rightarrow G+_{(f_{2})_{\Lambda_{K}}}Y_{2}$.
\end{prop}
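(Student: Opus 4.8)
The plan is to check that $\Pi$ and $\Lambda$ are well defined, that they land in the stated categories, and that they are mutually inverse; functoriality will then be automatic. The essential remark is that, at the level of gluing maps, $\Pi$ replaces a boundary map $\partial\colon Closed(G)\to Closed(Y)$ by the composite $\partial_{\Pi_K}=\partial\circ\Pi_K\colon Closed(X)\to Closed(Y)$, while $\Lambda$ replaces $f\colon Closed(X)\to Closed(Y)$ by $f_{\Lambda_K}=f\circ\Lambda_K$; on morphisms both functors leave the underlying map $\phi\colon Y_1\to Y_2$ (and the boundary action $\psi$) untouched. Since $\Pi_K$ and $\Lambda_K$ send $\emptyset$ to $\emptyset$ and preserve finite unions (immediate from their definitions, using that $\varphi(g,K)$ meets $A\cup B$ iff it meets $A$ or $B$), the composites $\partial\circ\Pi_K$ and $f\circ\Lambda_K$ are admissible, so the glued spaces make sense. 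Preservation of identities and of composition is then trivial, since a morphism $id+\phi$ is sent to $id+\phi$.

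The first real task is to show that the image objects are genuine perspective compactifications, i.e. that $X+_{\partial\circ\Pi_K}Y$ is a compact Hausdorff compactification of $X$ on which $\varphi+\psi$ acts continuously and perspectively (and dually for $\Lambda$). The point is to transport the defining estimate of perspectivity through $\Pi_K$. Given a compact $K'\subseteq X$, one covers it by finitely many translates $\varphi(g,K)$ of the fundamental domain; then perspectivity of $G+_{\partial}Y$ — all but finitely many left translates of a finite set are $u$-small — forces all but finitely many $\varphi(g,K')$ to be $u$-small in $X+_{\partial\circ\Pi_K}Y$, which is exactly the perspectivity condition for the image. Compactness and the Hausdorff property of the glued space follow from the same control together with the general fact (recorded before the statement) that a space is recovered from its open part and its complement as an Artin--Wraith glueing. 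I expect this transfer of compactness and of the perspectivity estimate to be the main obstacle, since it is here that the interplay between cocompactness of $\varphi$ and the uniform structure of $\bar G$ must be made precise.

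It remains to prove that $\Lambda\circ\Pi=id$ and $\Pi\circ\Lambda=id$, which reduces to the two identities $\partial\circ\Pi_K\circ\Lambda_K=\partial$ and $f\circ\Lambda_K\circ\Pi_K=f$. A direct computation gives $\Pi_K(\Lambda_K(F))=FS$, where $S=\{s\in G:\varphi(s,K)\cap K\neq\emptyset\}$ is finite by proper discontinuity and compactness of $K$, and $\Lambda_K(\Pi_K(T))=\bigcup\{\varphi(g,K):\varphi(g,K)\cap T\neq\emptyset\}$, a bounded thickening of $T$ containing $T$. Thus both identities follow from a single principle: a perspective boundary map is \emph{insensitive to bounded thickening}. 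Concretely, interpreting $\partial(F)=\overline{F}\cap Y$ and $f(T)=\overline{T}\cap Y$ as sets of boundary limit points, one uses the perspectivity estimate — that $\varphi(g_i,K)$ becomes arbitrarily $u$-small as $g_i\to\infty$ — to see that a point and any uniformly-close companion must converge to the same point of $Y$. Hence $\partial(Fs)=\partial(F)$ for each fixed $s$, so $\partial(FS)=\bigcup_{s\in S}\partial(Fs)=\partial(F)$; dually, the boundary limit set of the thickening $\Lambda_K(\Pi_K(T))$ coincides with that of $T$, giving $f\circ\Lambda_K\circ\Pi_K=f$.

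Finally, for morphisms one checks that continuity of $id+\phi$ between two glueings is governed by an inclusion of boundary maps (the morphism analogue of Corollary \ref{continuousidentity}), namely $\phi\circ\partial_1\subseteq\partial_2$; precomposing this inclusion with $\Pi_K$ (respectively $\Lambda_K$) preserves it, so $\Pi$ and $\Lambda$ carry morphisms to morphisms, and equivariance survives because neither $\phi$ nor $\psi$ is altered. Combining the object-level identities with the triviality of the functors on the underlying maps of morphisms yields $\Lambda\circ\Pi=id$ and $\Pi\circ\Lambda=id$ on the nose, giving the claimed isomorphism of categories rather than a mere equivalence.
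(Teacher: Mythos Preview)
The paper does not contain a proof of this proposition: it is stated in the Preliminaries with the citation ``(Theorem 3.2 of \cite{So})'' and is used as a black box thereafter. There is therefore nothing in the present paper to compare your argument against; the proof lives in the cited reference \cite{So}.

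That said, your outline is a plausible reconstruction of what such a proof must look like. The computation $\Pi_K(\Lambda_K(F))=FS$ with $S=\{s:\varphi(s,K)\cap K\neq\emptyset\}$ finite is correct, and the heuristic that perspective boundary maps are insensitive to bounded thickenings is exactly the right principle; your justification via the smallness of $\{g,gs\}$ for cofinitely many $g$ is the right mechanism for $\partial(Fs)=\partial(F)$. Two places where your sketch is thin: first, the claim that $X+_{\partial\circ\Pi_K}Y$ is compact and Hausdorff is not an immediate consequence of the Artin--Wraith formalism and needs an argument (in \cite{So} this is where most of the work goes); second, for the identity $f\circ\Lambda_K\circ\Pi_K=f$ you need the perspectivity of $X+_f Y$ (not of $G+_\partial Y$), and the ``bounded thickening'' $\Lambda_K(\Pi_K(T))$ is controlled by the fact that each $\varphi(g,K)$ meeting $T$ is eventually $u$-small --- you gesture at this but do not quite say it. These are refinements rather than gaps; the skeleton of your argument is sound.
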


Since $\Pi$ and $\Lambda$ do not depend of the choice of the fundamental domain (Propositions 3.18 and 3.20 of \cite{So}), we denote $\partial_{\Pi_{K}}$ by $\partial_{\Pi}$ and $f_{\Lambda_{K}}$ by $f_{\Lambda}$.

\begin{prop}\label{ezcorrespondence}Let $G$ be a group and $\varphi_{i}: G \curvearrowright X_{i}$ properly discontinuous and cocompact actions, $i = \{1,2\}$, where $X_{1}$ and $X_{2}$ are Hausdorff locally compact metric ANR spaces. If $\Pi_{i}: EPers_{0}(G) \rightarrow EPers_{0}(\varphi_{i})$ and $\Lambda_{i}: EPers_{0}(\varphi_{i}) \rightarrow EPers_{0}(G)$ are the  functors induced by $\varphi_{i}$, then $\Pi_{2} \circ \Lambda_{1}$ and $\Pi_{1}\circ \Lambda_{2}$ preserves $E\mathcal{Z}$-structures. \eod
\end{prop}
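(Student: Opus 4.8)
The plan is to reduce the statement to two genuinely topological properties and then transport them along a geometric comparison map between $X_{1}$ and $X_{2}$. Recall that an $E\mathcal{Z}$-structure is a perspective compactification $\bar{X} = X +_{f} Y$ in which, in addition, $\bar{X}$ is a finite-dimensional compact AR and $Y$ is a $\mathcal{Z}$-set. The remaining requirements in the definition of an $E\mathcal{Z}$-structure — that the complement carry a properly discontinuous cocompact $G$-action, that this action extend to the compactification, and that the nullity condition hold — are exactly the data packaged into the notion of a perspective compactification, hence are automatic for every object of $EPers_{0}(\varphi_{2})$. Writing $\Pi_{2}\circ\Lambda_{1}(X_{1} +_{f} Y) = X_{2} +_{f'} Y$, the boundary $Y$ is left unchanged; so it suffices to prove that if $\bar{X}_{1} = X_{1} +_{f} Y$ is a finite-dimensional compact AR with $Y$ a $\mathcal{Z}$-set, then the same holds for $\bar{X}_{2} = X_{2} +_{f'} Y$.

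Both functors pass through the same compactification $\bar{G} = \Lambda_{1}(\bar{X}_{1})$ of the discrete group $G$: fixing fundamental domains $K_{1}\subseteq X_{1}$ and $K_{2}\subseteq X_{2}$, a group element $g$ is recorded in $X_{1}$ by the translate $\varphi_{1}(g,K_{1})$ and in $X_{2}$ by $\varphi_{2}(g,K_{2})$. This lets me define a $G$-equivariant map $k: X_{2} \to X_{1}$ sending a point lying over $g$ (that is, in $\varphi_{2}(g,K_{2})$) into $\varphi_{1}(g,K_{1})$; using that $X_{1}$ is an ANR and the action is proper, such a $k$ can be made continuous and proper by the usual partition-of-unity argument over the locally finite cover $\{\varphi_{2}(g,K_{2})\}_{g\in G}$, and symmetrically one obtains $h: X_{1}\to X_{2}$. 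By construction $h$ and $k$ move points only a bounded amount as measured through $G$, so the nullity condition defining perspectivity forces them to extend continuously to maps $\bar{h}: \bar{X}_{1}\to\bar{X}_{2}$ and $\bar{k}: \bar{X}_{2}\to\bar{X}_{1}$ that are the identity on $Y$. These are precisely the morphisms corresponding under $\Pi_{i},\Lambda_{i}$ to the identity on $\bar{G}$, so $\bar{k}\circ\bar{h}$ and $\bar{h}\circ\bar{k}$ are equivariantly and boundedly, hence after extension $\bar{X}$-controlledly, homotopic to the respective identities rel $Y$.

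With these maps the transport is formal except for one point. Contractibility of $\bar{X}_{2}$ is immediate, since $\bar{h},\bar{k}$ exhibit $\bar{X}_{2}$ as homotopy equivalent to the contractible space $\bar{X}_{1}$. The $\mathcal{Z}$-set property also transports directly: composing the instant homotopy $H\colon \bar{X}_{1}\times[0,1]\to\bar{X}_{1}$ that pushes $\bar{X}_{1}$ off $Y$ with $\bar{h}$ and $\bar{k}$, and using that both maps are the identity on $Y$ and carry the interior into the interior, yields a homotopy pushing $\bar{X}_{2}$ off $Y$, so $Y$ is a $\mathcal{Z}$-set in $\bar{X}_{2}$.

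The one substantial step, and the main obstacle, is to show that $\bar{X}_{2}$ is itself a finite-dimensional compact ANR rather than merely a contractible homotopy-dominated compactum — homotopy domination by an AR is too weak to guarantee this. Here I would exploit the control built into $\bar{h}$ and $\bar{k}$: because the composites are homotopic to the identities by homotopies whose tracks become arbitrarily small near $Y$ (the nullity condition again), $\bar{k}$ is a fine homotopy equivalence over $\bar{X}_{1}$, equivalently a cell-like map between finite-dimensional compacta, its point-inverses being contractible over small interior neighborhoods and single points along $Y$. Invoking the recognition theorem that a cell-like (fine homotopy equivalence) image of a finite-dimensional compact ANR onto a finite-dimensional compactum is again a compact ANR, I conclude that $\bar{X}_{2}$ is a finite-dimensional compact ANR; being contractible, it is an AR, indeed an ER. Together with the previous paragraph this proves that $X_{2} +_{f'} Y$ is an $E\mathcal{Z}$-structure, and the symmetric argument handles $\Pi_{1}\circ\Lambda_{2}$, completing the proof. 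I expect verifying that $\bar{k}$ is genuinely cell-like with the finite-dimensionality in place — rather than a mere homotopy equivalence — to be where essentially all the technical work lies.
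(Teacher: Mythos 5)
You should first know that the paper itself contains no proof of this proposition: it is stated with an end-of-proof mark as a result imported from the author's companion work (the surrounding remark identifies it with the Guilbault--Moran correspondence \cite{GM}), so your attempt can only be judged against the standard ``boundary swapping'' argument. Your reduction (perspectivity already accounts for the extension of the action and the nullity condition), your bounded comparison maps, and your use of nullity to extend them over the compactifications by the identity on $Y$ all match that argument. The genuine gap is the ANR step, and it is not, as you suggest, a technicality: the cell-like route cannot work. First, the control you actually possess is only near $Y$; in the interior the tracks of the homotopies $\bar{h}\bar{k}\simeq id$ and $\bar{k}\bar{h}\simeq id$ are merely bounded in the proper geometry of the $X_{i}$, and a fixed bounded size does not become arbitrarily small in $\bar{X}_{2}$ away from the boundary, so $\bar{k}$ is not a fine homotopy equivalence. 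Second, partition-of-unity comparison maps can fold, so their point inverses need not be cell-like (already for $G=\Z$ there are proper maps $\R\to\R$ at bounded distance from the identity with disconnected point preimages). Third, Kozlowski's theorem would have to be applied to a cell-like surjection \emph{onto} $\bar{X}_{2}$, i.e.\ to $\bar{h}$ rather than $\bar{k}$, and it needs $\bar{X}_{2}$ finite-dimensional, which is not among the hypotheses. Decisively: take $G=\Z$, $X_{1}=\R$ with $\bar{X}_{1}=[-\infty,+\infty]$, and $X_{2}=\R\times[0,1]^{2}$. Then $\bar{X}_{2}$ is the two-point compactification of $\R\times[0,1]^{2}$, i.e.\ a $3$-ball with the two added points on its boundary sphere --- a perfectly good $E\mathcal{Z}$-structure, so the proposition's conclusion holds --- yet no cell-like map from the $1$-dimensional compactum $\bar{X}_{1}$ onto a $3$-ball exists at all (cell-like images of $1$-dimensional compacta have dimension at most $1$). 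So no repair of your strategy can succeed in general. The standard proof avoids cell-likeness: using the near-boundary control one builds, for every $\epsilon>0$, a map $\bar{X}_{2}\to X_{2}$ that is $\epsilon$-close and $\epsilon$-homotopic to the identity, by pushing a small collar of $Y$ into the interior via the transported instant homotopy and patching with the identity deep inside; Hanner's criterion ($\epsilon$-domination by the ANR $X_{2}$) then gives that $\bar{X}_{2}$ is an ANR, and the same small homotopies give the $\mathcal{Z}$-set property. Note that your $\mathcal{Z}$-set argument has the matching defect: your transported homotopy starts at $\bar{h}\bar{k}$, not at the identity, and without smallness it only shows that $X_{2}\hookrightarrow\bar{X}_{2}$ is a homotopy equivalence, which is strictly weaker than $Y$ being a $\mathcal{Z}$-set.

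A second gap hides in ``symmetrically one obtains $h\colon X_{1}\to X_{2}$''. Extending the vertex map over the simplices of the nerve requires the \emph{target} to be (uniformly) contractible; this is free for $X_{1}$, which is the complement of a $\mathcal{Z}$-set in a compact AR, but nothing in the hypotheses gives it for $X_{2}$, and it cannot be derived. Indeed the proposition as literally stated fails without it: for $G=\Z$, $X_{1}=\R$, $X_{2}=\R\times S^{2}$, the compactification produced by $\Pi_{2}\circ\Lambda_{1}$ is the suspension of $S^{2}$, i.e.\ $S^{3}$, which is not an AR and in which the two boundary points are not $\mathcal{Z}$-sets. So contractibility (equivalently, AR-ness) of $X_{2}$ is a genuine hypothesis --- Guilbault and Moran assume it, and in this paper's application it holds because $X_{2}\cong S^{n}-\{p\}\cong\R^{n}$ --- and a correct proof must invoke it exactly at the step you call symmetric. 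Finally, insisting that $h$ and $k$ be equivariant is both unnecessary (only boundedness is used for the extensions and the control) and problematic when $G$ has torsion, since equivariant maps into the $X_{i}$ meet fixed-point obstructions over cells of the nerve with nontrivial finite stabilizers.
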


\begin{obs}Actually, this proposition says more information. It says that, in this case, this correspondence agrees with  Guilbault and Moran's correspondence \cite{GM}.
\end{obs}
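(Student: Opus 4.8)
The plan is to present both correspondences as Artin--Wraith glueings along explicit admissible maps and then invoke Corollary \ref{continuousidentity}. First I would unwind $\Pi_2 \circ \Lambda_1$ on an object $X_1 +_f Y$. Writing $K_1, K_2$ for compact fundamental domains of $\varphi_1, \varphi_2$, the definitions of $\Pi$ and $\Lambda$ give, for each closed $S \subseteq X_2$,
\[
f_{PL}(S) = f\big(\varphi_1(\Pi_{K_2}(S), K_1)\big) = f\big(\varphi_1(\{g \in G : \varphi_2(g,K_2) \cap S \neq \emptyset\}, K_1)\big).
\]
By proper discontinuity the translates $\varphi_1(g,K_1)$ form a locally finite family, so $\rho(S) := \varphi_1(\Pi_{K_2}(S), K_1)$ is closed, and $\rho$ is (up to bounded error) the set-valued Milnor--\v{S}varc coarse equivalence $X_2 \to X_1$ factored through $G$. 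Thus $f_{PL}(S)$ is the limit set in $Y$ of the coarse image of $S$ in $X_1$.

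Next I would extract from \cite{GM} the point-set form of their transfer in this geometric situation. Given the $E\mathcal{Z}$-structure $(\overline{X_1}, Y)$ and the geometric action $\varphi_2$, Guilbault and Moran equip $X_2 \dot{\cup} Y$ with the compactification in which a net $x_\alpha \in X_2$ converges to $y \in Y$ exactly when its image under a $G$-coarse equivalence $r \colon X_2 \to X_1$ converges to $y$ in $\overline{X_1}$; equivalently, it is the glueing along $f_{GM}(S) = f(Cl_{X_1}\, r(S))$. The content of their construction is that this is independent of the chosen $r$.

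The main step is to match the two admissible maps. Since $\rho$ is itself a $G$-coarse equivalence $X_2 \to X_1$, it is a legitimate choice of $r$, and with this choice $f_{GM}(S) = f(Cl_{X_1}\rho(S)) = f(\rho(S)) = f_{PL}(S)$ for every closed $S$. It remains to see that other choices of $r$ give nothing new, which in the present setting is exactly the perspectivity of $\overline{X_1}$: if $A, B \in Closed(X_1)$ lie at bounded Hausdorff distance, then for each entourage $u$ of the unique compatible uniform structure only finitely many $G$-translates of a fixed compact neighbourhood fail to be $Small(u)$, so boundedly close nets in $A$ and $B$ share their limits in $Y$ and hence $f(A) = f(B)$. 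Therefore $f_{PL} = f_{GM}$, and by Corollary \ref{continuousidentity} the identity $X_2 +_{f_{PL}} Y \to X_2 +_{f_{GM}} Y$ is a homeomorphism. As both functors restrict to the identity on $Y$ and intertwine the same action $\varphi_2 + \psi$, $\Pi_2 \circ \Lambda_1$ coincides with the Guilbault--Moran transfer as objects of $EPers_0(\varphi_2)$; the case of $\Pi_1 \circ \Lambda_2$ is symmetric.

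I expect the main obstacle to be the reconciliation of frameworks rather than the comparison itself. Guilbault and Moran phrase their correspondence up to controlled (shape) equivalence of $\mathcal{Z}$-structures in the separable metrizable setting, so the delicate part is to pin down from \cite{GM} the precise point-set compactification $f_{GM}$ above and to verify that their independence-of-choices lemma is literally the perspectivity statement used in the third paragraph. Once both sides are expressed as glueings along admissible maps, the equality is the short application of perspectivity and Corollary \ref{continuousidentity} indicated above.
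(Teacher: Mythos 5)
The paper itself contains no proof of this remark: Proposition \ref{ezcorrespondence} is imported from \cite{So2}, and the assertion that the transfer $\Pi_{2}\circ\Lambda_{1}$ agrees with Guilbault--Moran's correspondence is left to the references (cf.\ also \cite{So5}), so your attempt can only be measured against the intended argument rather than a written one. That said, your route is the natural one in this formalism and its skeleton is sound: $\Pi_{2}\circ\Lambda_{1}$ does send $X_{1}+_{f}Y$ to the glueing along $S\mapsto f(\varphi_{1}(\Pi_{K_{2}}(S),K_{1}))$, and Guilbault--Moran's boundary swap is indeed a pullback glueing in the sense of the paper's pullback definition (with $\varpi = id$) along a proper controlled map $r\colon X_{2}\rightarrow X_{1}$ produced by their generalized Milnor--\v{S}varc argument, i.e.\ $f_{GM}(S) = f(Cl_{X_{1}}r(S))$. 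So the comparison does reduce, as you say, to perspectivity plus Corollary \ref{continuousidentity}.

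Two steps need repair before this is a proof. First, $\rho(S)=\varphi_{1}(\Pi_{K_{2}}(S),K_{1})$ is set-valued, so it is not literally ``a legitimate choice of $r$'': the Guilbault--Moran construction requires an honest proper map with homotopy control, and a raw selection $r(x)\in\rho(\{x\})$ need not be continuous. The correct statement is that any admissible $r$ stays within a uniformly bounded orbit-distance of $\rho$ (i.e.\ $r(x)\in\varphi_{1}(F\cdot\Pi_{K_{2}}(\{x\}),K_{1})$ for a fixed finite $F\subseteq G$), after which your perspectivity argument gives $Cl_{Y}f(Cl_{X_{1}}r(S)) = Cl_{Y}f(\rho(S))$. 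Second, ``bounded Hausdorff distance'' must be read in the orbit-coarse sense ($B\subseteq\varphi_{1}(F\cdot\Pi_{K}(A),K')$ for fixed compacts $K,K'$ and finite $F$), not with respect to an arbitrary metric compatible with the topology of $X_{1}$: the actions are only by homeomorphisms, and perspectivity controls diameters of translates $\varphi_{1}(g,K)$, not of metric balls. As written your claim is false in general --- in the open-disk model of the hyperbolic plane with the (bounded) Euclidean metric, all closed sets are at Hausdorff distance at most $2$, yet distinct geodesics have distinct limit sets in the perspective compactification by $S^{1}$. Finally, a small normalization: two admissible maps induce the same topology iff they agree after composing with $Cl_{Y}$, so the conclusion should be $Cl_{Y}\circ f_{PL} = Cl_{Y}\circ f_{GM}$, which still yields the desired homeomorphism by applying Corollary \ref{continuousidentity} in both directions; with these corrections, and the routine verification on morphisms and equivariance that you only assert, the argument goes through.
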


\subsection{Blowups}

\subsubsection{Convergence actions}

\begin{defi}Let $G$ be a group, $X$ a topological space and $\varphi: G\curvearrowright X$ an action by homeomorphisms. We say that $\varphi$ is properly discontinuous if for every compact set $K \subseteq X$, the set $\{g \in G: \varphi(g,K)\cap K \neq \emptyset\}$ is finite. We say that $\varphi$ is a covering action if $\forall x \in X$, there exists an open neighbourhood $U$ of $x$ such that $\forall g \in G- \{1\}$, $\varphi(g,U)\cap U = \emptyset$. We say that $\varphi$ is cocompact if the quotient space $X/\varphi$ is compact.
\end{defi}

\begin{obs}\label{coveringhausdorff}Suppose that $X$ is locally compact and Hausdorff. It is well known that if $\varphi$ is a covering action, then the projection map $X \rightarrow X/\varphi$ is a covering map, if $\varphi$ is properly discontinuous and free (in special if $G$ is torsion-free), then $\varphi$ is a covering action and if $\varphi$ is properly discontinuous then $X/\varphi$ is Hausdorff.
\end{obs}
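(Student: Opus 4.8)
The statement bundles three standard facts about group actions, and I would treat them in the order given, since the arguments share a common toolkit. Throughout I would first record that the quotient map $q\colon X\to X/\varphi$ is open: for open $V\subseteq X$ one has $q^{-1}(q(V))=\bigcup_{g\in G}\varphi(g,V)$, a union of images of $V$ under homeomorphisms and hence open, so $q(V)$ is open. This openness is what converts disjointness of saturations in $X$ into disjointness of images in the quotient, and I would use it in both the first and third claims.

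For the first claim, given a covering action and a point $x$, I would take the neighbourhood $U$ supplied by the definition and show $q(U)$ is evenly covered. The preimage $q^{-1}(q(U))=\bigcup_{g}\varphi(g,U)$ is a \emph{disjoint} union: if $\varphi(g,U)\cap\varphi(h,U)\neq\emptyset$ with $g\neq h$, translating by $g^{-1}$ gives $U\cap\varphi(g^{-1}h,U)\neq\emptyset$, contradicting the defining property. Each $\varphi(g,U)$ is open and $q$ restricts on it to a continuous open bijection onto $q(U)$; the one point needing care is injectivity on a single sheet, which I would get by writing two identified points as $\varphi(g,u_1)$ and $\varphi(g,u_2)$, with $h$ the element realising the identification, and deducing $u_2=\varphi(g^{-1}hg,u_1)\in U\cap\varphi(g^{-1}hg,U)$, forcing $g^{-1}hg=1$. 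Hence each sheet maps homeomorphically and $q$ is a covering map.

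For the second claim I would produce, for each $x$, a separating neighbourhood. Using local compactness, choose a compact neighbourhood $K$ of $x$; proper discontinuity makes $F=\{g:\varphi(g,K)\cap K\neq\emptyset\}$ finite. For the finitely many $g\in F-\{1\}$, freeness gives $\varphi(g,x)\neq x$, so Hausdorffness separates these points, and pulling back and intersecting the finitely many resulting neighbourhoods with the interior of $K$ yields $U$ with $\varphi(g,U)\cap U=\emptyset$ for every $g\neq 1$ (the case $g\notin F$ being automatic once $U\subseteq K$). The parenthetical remark is immediate: taking $K=\{x\}$ shows point stabilisers are finite, so torsion-freeness of $G$ forces the action to be free. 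The third claim is the same idea applied to two distinct orbits through $x$ and $y$: choose compact neighbourhoods $K_x$ and $K_y$, note $S=\{g:\varphi(g,K_x)\cap K_y\neq\emptyset\}$ is finite, use that $\varphi(g,x)\neq y$ for all $g$ (distinct orbits) to separate and pull back, and intersect with interiors to obtain open $A\ni x$ and $B\ni y$ with $\varphi(g,A)\cap B=\emptyset$ for all $g$; then $q(A)$ and $q(B)$ are disjoint open neighbourhoods by openness of $q$.

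The only genuine obstacle, and the sole reason local compactness and Hausdorffness of $X$ enter, is bridging from the ``compact set'' formulation of proper discontinuity to the pointwise and orbitwise separation the conclusions demand. Proper discontinuity only constrains \emph{finitely many} group elements relative to a \emph{fixed} compact set, so in both the second and third claims one cannot work with points directly but must first fatten them to compact neighbourhoods before the finiteness becomes usable; keeping track of which finitely many $g$ are dangerous and separating exactly those is the step I would be most careful to write cleanly.
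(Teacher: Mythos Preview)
The paper does not supply a proof for this remark: it is stated in the preliminaries as ``well known'' and left at that, so there is no argument in the paper to compare yours against. Your proof is correct and is precisely the standard one; the only step you leave implicit is why $S=\{g:\varphi(g,K_x)\cap K_y\neq\emptyset\}$ is finite in the third claim, which follows from applying the definition to the compact set $K_x\cup K_y$.
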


\begin{defi}Let $\varphi: G \curvearrowright X$ be an action by homeomorphisms. A point $p \in X$ is bounded parabolic if the action $\varphi|_{Stab_{\varphi}p\times X-\{p\}}: Stab_{\varphi}p \curvearrowright X-\{p\}$ is properly discontinuous and cocompact.
\end{defi}

\begin{defi}Let $G$ be a group, $X$ a Hausdorff compact space and $\varphi: G \curvearrowright X$ an action by homeomorphisms. We say that $\varphi$ has the convergence property if for every wandering net $\Phi$ (i.e. a net such that two elements are always distinct) has a subnet $\Phi'$ such that there exists $a,b \in X$ such that $\Phi'|_{X-\{b\}}$ converges uniformly to $a$.
We say that $\Phi'$ is a collapsing net with attracting point $a$ and repelling point $b$.
\end{defi}

\begin{obs}Regardless the definition above allow us to consider these actions, we are not considering $\varphi$ as a convergence action if the set $X$ has cardinality $2$, unless that $G$ is virtually cyclic, $X = Ends(G)$ and $\varphi$ is the action induced by the left multiplication action $G \curvearrowright G$. \end{obs}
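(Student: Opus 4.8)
The plan is to read this statement not as a theorem with dynamical content but as a \emph{convention}, and to justify it by showing that, without the exclusion, the convergence property would be satisfied \emph{trivially} by \emph{every} action on a two-point set, so that such examples carry no information; the lone exception is retained because it is the canonical nondegenerate model. First I would record the elementary topology of $X$ when $\#X = 2$: a Hausdorff space with two points is discrete, so here $X$ is compact and discrete, and its self-homeomorphism group $Homeo(X)\cong\mathbb{Z}/2\mathbb{Z}$ has order two. Consequently the action factors through a homomorphism $G\to\mathbb{Z}/2\mathbb{Z}$, and the collection of maps $\{\varphi(g,\cdot):g\in G\}$ has at most two distinct members.

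Next I would verify the triviality, which is the only genuine computation. Write $X=\{p,q\}$ and let $\Phi=(g_{i})$ be any wandering net. Since the induced homeomorphisms $\varphi(g_{i},\cdot)$ take values in the two-element set $Homeo(X)$, finiteness lets me extract a subnet $\Phi'$ along which $\varphi(g_{i},\cdot)$ is constant, equal to some fixed $h$. Choosing $b=q$ makes $X-\{b\}=\{p\}$ a single point, and setting $a=h(p)$, uniform convergence of $\Phi'|_{X-\{b\}}$ to $a$ reduces (because $X$ is discrete and the restriction is to one point) to the statement that $\varphi(g_{i},p)=h(p)=a$ along the subnet, which holds by construction. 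Hence every wandering net admits a collapsing subnet, so the literal definition is met by an arbitrary action on a two-point set, and this is precisely the degenerate situation the convention rules out.

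Finally I would isolate the exceptional case worth keeping. When $G$ is virtually cyclic, $Ends(G)$ consists of exactly two points, and the action induced by left multiplication $G\curvearrowright G$ is the standard geometric model: it is the boundary action of the elementary two-ended group $G$, the unique two-point example that arises as a genuine boundary/convergence action compatible with the rest of the theory. Retaining only this case aligns the class of convergence actions with the larger framework while discarding the content-free examples identified above. The main obstacle is conceptual rather than analytic: since the statement is a convention, the real work is the bookkeeping in the middle step — clarifying that ``wandering'' (distinct elements of the net) is compatible with the induced net being eventually constant because $Homeo(X)$ is finite, and confirming that the singleton $X-\{b\}$ forces uniform convergence to be automatic. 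I would also note that under the alternative reading in which ``wandering'' requires the homeomorphisms themselves to be pairwise distinct, no infinite wandering net exists on a two-point set, so the convergence property is vacuously true and the conclusion is unchanged.
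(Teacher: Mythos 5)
The statement you were given is a remark recording a \emph{convention}, and the paper supplies no proof of it at all; your decision to read it as a definitional exclusion and then justify the exclusion is exactly the right move, and your computation is sound. The key verification is correct: a two-point Hausdorff space is discrete, $Homeo(X)$ has order two, so any wandering net in $G$ admits a subnet along which the induced homeomorphism is a constant $h$; taking $b=q$ and $a=h(p)$, uniform convergence on the singleton $X-\{b\}$ is automatic, so \emph{every} action on a two-point compactum satisfies the letter of the convergence definition. Your observation about the alternative reading of ``wandering'' (vacuous truth) is also correct. One thing worth adding, which sharpens your ``content-free'' point into a concrete failure inside this very paper: on a two-point set every point is also conical whenever $G$ is infinite (the pairs $(\varphi(g,p),\varphi(g,q))$ lie in the closed set $X^{2}-\Delta X$ since homeomorphisms are injective), so without the convention Definition \ref{hypgroup} would declare, say, $\Z^{2}$ acting trivially on two points to be ``hyperbolic with boundary $S^{0}$,'' contradicting uniqueness of the hyperbolic boundary and the classification of groups with two-point boundary as two-ended. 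The retained exception is precisely the genuine boundary action: an infinite virtually cyclic $G$ acting on $Ends(G)$, which is the attractor-sum/hyperbolic boundary action the rest of the theory (e.g.\ Lemma \ref{finiteindexhyp} and the $n=1$ case of Theorem \ref{sierpinskihyp}, where $\#\partial_{\infty}(Q)=2$ is used) actually needs. So your proposal does not diverge from the paper — it fills in the rationale the paper leaves implicit.
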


\begin{defi}\label{attractorsumcomp}Let $G$ be a group acting properly discontinuously on a locally compact Hausdorff space $X$ and acting on a Hausdorff compact space $Y$ with the convergence property. The attractor-sum compactification of $X$ is the unique compactification of $X$ with $Y$ as its remainder and such that the action of $G$ on it (that extends both actions) has the convergence property. We denote such compactification by $X+_{f_{c}} Y$ (where $c$ means that the action still has the convergence property).
\end{defi}

\begin{obs}The existence and uniqueness of the attractor-sum compactification is due to Gerasimov (Proposition 8.3.1 of \cite{Ge2}).
\end{obs}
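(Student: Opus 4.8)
The plan is to establish the two assertions packaged in Definition \ref{attractorsumcomp}: that a compactification of $X$ with remainder $Y$ carrying a convergence action extending the two given actions \emph{exists}, and that it is \emph{unique}. I work throughout in the Artin--Wraith language, writing any candidate as $X +_{f} Y$ for an admissible map $f : Closed(X) \to Closed(Y)$; the problem then splits into identifying the correct $f$ and showing that the convergence requirement leaves no freedom in its choice. The comparison of two such glueings is controlled by Corollary \ref{continuousidentity}, which reduces equality of the two compactifications to equality of their associated maps $f$.

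For uniqueness, suppose $X +_{f_1} Y$ and $X +_{f_2} Y$ are two compactifications on which the extended $G$-action is a convergence action. By Corollary \ref{continuousidentity} it suffices to show $f_1(S) = f_2(S)$ for every $S \in Closed(X)$. The key observation is that $y \in f_i(S)$ if and only if some net in $S$ converges to $y$ in the ambient space, and that any net in $X$ converging to a point of $Y$ must eventually leave every compact subset of $X$ --- a standard consequence of $X$ being open, dense and locally compact inside a Hausdorff compactification, since otherwise a subnet would cluster at a point of a compact $K \subseteq X$, contradicting convergence to $y \notin K$. The convergence property then pins down the limit point in $Y$ of such an escaping net purely in terms of the given dynamics $G \curvearrowright X$ and $G \curvearrowright Y$, independently of whether the ambient space is $X +_{f_1} Y$ or $X +_{f_2} Y$. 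Since this characterization of accumulation refers only to $X$, $Y$ and the two actions, it produces the same set for both maps, so $f_1 = f_2$ and the compactifications coincide.

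For existence I would \emph{define} $f$ by exactly that intrinsic rule --- letting $f(S)$ be the set of limits in $Y$ of wandering nets in $S$ that escape every compact of $X$, described concretely through the collapsing behaviour of the convergence action --- and then verify in turn that (i) $f$ is admissible, i.e. $f(\emptyset)=\emptyset$ and $f(A\cup B)=f(A)\cup f(B)$, which comes from the subnet extraction built into the convergence property; (ii) $X +_{f} Y$ is a Hausdorff compactification with $X$ open and dense and $Y$ as remainder, using local compactness of $X$, compactness of $Y$, and proper discontinuity of $G \curvearrowright X$ to separate points of $X$ from points of $Y$ and to establish density; and (iii) the diagonal action extends to an action by homeomorphisms, which is immediate since $f$ is $G$-equivariant by construction.

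The heart of the matter, and the step I expect to be the main obstacle, is checking that the extended action on $Z = X +_{f} Y$ actually satisfies the convergence property. Given a wandering net $(\gamma_\lambda)$ in $G$, I would first invoke the convergence property of $G \curvearrowright Y$ to pass to a subnet collapsing to an attracting/repelling pair $(a,b)$ on $Y$, and then must upgrade this to uniform convergence of $\gamma_\lambda|_{Z - \{b\}}$ toward $a$ \emph{across the interface}, that is, including points of $X$ lying arbitrarily close to $Y$. Here proper discontinuity is essential: for a compact $K \subseteq X$, all but finitely many translates $\gamma_\lambda K$ leave $K$, and the defining rule for $f$ forces their limit behaviour to track where $\gamma_\lambda$ sends the boundary, so $\gamma_\lambda K$ is eventually pushed into arbitrarily small neighbourhoods of $a$. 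Fusing the estimate on the $X$-part with the collapsing estimate on $Y$ into a single uniform statement on $Z - \{b\}$ --- and carrying this out without assuming cocompactness, via a compact exhaustion of $X$ --- is the technical crux, and is precisely the content of Gerasimov's Proposition 8.3.1 of \cite{Ge2}.
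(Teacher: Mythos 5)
The first thing to note is that the paper contains no proof of this statement at all: the remark is a pure attribution, delegating both existence and uniqueness of the attractor-sum compactification to Gerasimov (Proposition 8.3.1 of \cite{Ge2}). So there is no internal argument to match your proposal against, and the comparison has to be with Gerasimov's external proof. Judged as a standalone proof, your proposal has a genuine gap, and you name it yourself: the closing paragraph concedes that the decisive step --- upgrading the collapsing behaviour on $Y$ to uniform convergence across the interface on all of $X +_{f} Y$ --- ``is precisely the content of Gerasimov's Proposition 8.3.1''. A proof of the remark cannot invoke the proposition whose content the remark asserts; as written, the existence half of your argument is circular and stops exactly where the real work begins. (Your identification of $f(S)$ with the trace on $Y$ of the closure of $S$ is correct in the Artin--Wraith formalism, and reducing uniqueness to $f_1 = f_2$ via Corollary \ref{continuousidentity} is a sensible framing; the problem is what comes after.)

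The uniqueness half also contains an unjustified step, not merely a deferred one. You claim that for a net in a closed $S \subseteq X$ escaping every compact set, ``the convergence property pins down the limit point in $Y$ purely in terms of the given dynamics''. That is exactly what must be proved, and it is delicate because $G \curvearrowright X$ is assumed properly discontinuous but \emph{not} cocompact: the orbit $\varphi(G,K)$ of a compact $K$ need not meet a tail of the escaping net, so the dynamics can be entirely blind to the region of $X$ through which the net escapes, and nothing you have granted yourself constrains where it accumulates in $Y$. As a test of your mechanism, take $G$ trivial (or finite): there are no wandering nets, so the convergence property holds vacuously on any compactification extending the actions, yet a space $X$ with several ends admits several non-homeomorphic compactifications with the same finite remainder $Y$ --- so a uniqueness argument using only proper discontinuity on $X$ plus the bare convergence property on the total space, as yours does, cannot be complete; whatever safeguards make Gerasimov's statement true live in his precise hypotheses and in his explicit construction of the topology. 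Your proposed fix, a ``compact exhaustion of $X$'', does not help: an exhaustion is not $G$-adapted and does not reconnect escaping nets to the group action, and the smallness condition $\#\{g \in G : \varphi(g,K) \notin Small(u)\} < \aleph_{0}$ used elsewhere in the paper (perspectivity) is a \emph{consequence} one extracts from convergence compactifications, not a tool available to you at this stage. A genuine repair would have to reproduce Gerasimov's actual route: define the closed sets (equivalently, the entourages) of $X \dot{\cup} Y$ explicitly from the two actions, then verify compactness, Hausdorffness and the convergence property directly, rather than defining $f$ by the accumulation behaviour it is supposed to exhibit in a topology that has not yet been built.
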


\begin{defi}Let $\varphi: G \curvearrowright X$ be a convergence action and $p \in X$. We say that $p$ is a conical point if there is an infinite set $K \subseteq G$ such that $\forall q \neq p$, $Cl_{X}(\{(\varphi(g,p),\varphi(g,q)): g \in K\})\cap \Delta X = \emptyset$.
\end{defi}

\begin{defi}\label{hypgroup}Let $G$ be a group, $X$ a metrizable compact space and $\varphi: G \curvearrowright X$ an action by homeomorphisms. We say that $\varphi$ is hyperbolic if it has the convergence property and every point of $X$ is conical. In this case, we say that $G$ is hyperbolic and $X$ is its hyperbolic boundary. Then we denote it by $X = \partial_{\infty}(G)$.
\end{defi}

\begin{obs}Such space $X$, if it exists, is unique, up to unique equivariant homeomorphism (Lemma 5.1 of \cite{Bo2}).

An equivalent definition is that the minimal convergence action on $X$ induces a cocompact action on the space of distinct triples of $X$ (Theorem 8.1 of \cite{Bo1}). The classical definitions of hyperbolic group and hyperbolic boundary are also equivalent to that, i.e. $X$ is the hyperbolic boundary of a $\delta$-hyperbolic proper geodesic metric space, where $G$ acts properly discontinuously cocompactly and by isometries (Proposition 1.13 of \cite{Bo2} and Theorem 0.1 of \cite{Bo1}. In this present text we need to use just the one stated in the definition above.
\end{obs}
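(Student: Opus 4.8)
The plan is to reduce all three assertions of the remark to a single structural fact: a convergence action in which every point is conical is a \emph{uniform} convergence action, and such an action pins down $X$ rigidly. Throughout I write $X^{(3)}$ and $X^{(2)}$ for the spaces of pairwise-distinct triples and pairs of points of $X$, each carrying the diagonal $G$-action. I would first record the standing reductions that the convergence-group dynamics of Tukia and Bowditch supply and that I will use as black boxes: once $\#X \geqslant 3$ the action $\varphi$ is minimal (no proper non-empty closed $G$-invariant subset, since the limit set is all of $X$), loxodromic elements exist in abundance, each infinite-order $g$ carrying an attracting fixed point $g^{+}$ and a repelling fixed point $g^{-}$, and the pairs $(g^{+},g^{-})$ are dense in $X^{(2)}$. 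These are the only tools I need.

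\textbf{Equivalence with cocompactness on triples.} To show that ``every point is conical'' is equivalent to ``$G \curvearrowright X^{(3)}$ is cocompact,'' I would argue by contradiction in the substantive direction. If the action on $X^{(3)}$ were not cocompact, there is a wandering net $(g_{i})$ and a net of distinct triples $(x_{i},y_{i},z_{i})$ whose images cannot be pushed into a fixed compact subset of $X^{(3)}$, i.e.\ at least two of the transported coordinates come arbitrarily close together. Passing to a subnet, the convergence property supplies a collapsing subnet with attracting point $a$ and repelling point $b$; bookkeeping which coordinates tend to $b$ and which are flung to $a$ produces a point through which no conical net can be threaded, contradicting conicality. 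The converse, that cocompactness on triples forces every point $p$ to be conical, is the easy direction: feed a net of distinct triples based at $p$ into cocompactness and pull it back off the diagonal to realize the separation required in the definition of a conical point.

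\textbf{Uniqueness of $X$.} Given two hyperbolic actions $\varphi_{1}:G \curvearrowright X_{1}$ and $\varphi_{2}:G \curvearrowright X_{2}$, I would build the equivariant homeomorphism from the diagonal action on $X_{1}\times X_{2}$. Let $\Lambda \subseteq X_{1}\times X_{2}$ be the unique minimal non-empty closed $G$-invariant set; it contains every pair $(g^{+}_{1},g^{+}_{2})$ of matched attracting fixed points of a loxodromic $g$. The coordinate projections $\pi_{1}:\Lambda \to X_{1}$ and $\pi_{2}:\Lambda \to X_{2}$ are continuous, equivariant, and surjective (their images are closed and invariant, hence everything, by minimality of the factors). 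The crux is injectivity of $\pi_{1}$: if $(x,u)$ and $(x,v)$ lay in $\Lambda$ with $u \neq v$, then a net $(g_{i})$ realizing $x$ as a conical limit in $X_{1}$ has, by the convergence property applied \emph{simultaneously} in $X_{2}$, a collapsing subnet there, and running this dynamics in both factors at once forces $u$ and $v$ to be carried to the same attracting point and, via the reverse net $g_{i}^{-1}$, to coincide. Hence $\pi_{1}$ and symmetrically $\pi_{2}$ are continuous equivariant bijections from a compact space onto a Hausdorff space, so homeomorphisms, and $\phi = \pi_{2}\circ \pi_{1}^{-1}$ is the desired equivariant homeomorphism $X_{1}\to X_{2}$.

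\textbf{Rigidity of the homeomorphism and the classical picture.} To see that $\phi$ is the \emph{unique} equivariant homeomorphism, suppose $\psi,\psi':X_{1}\to X_{2}$ are two of them and set $\theta = (\psi')^{-1}\circ\psi$, an equivariant self-homeomorphism of $X_{1}$. For each loxodromic $g$ the point $g^{+}$ is intrinsically characterised as the unique attracting fixed point of $g$, a property preserved by any equivariant homeomorphism, so $\theta(g^{+})=g^{+}$; since these fixed points are dense and $\theta$ is continuous, $\theta=\mathrm{id}$. Finally, the identification with the classical hyperbolic boundary comes from feeding the cocompact action on $X^{(3)}$ into Bowditch's reconstruction: from it one builds a locally finite connected graph on which $G$ acts properly and cocompactly, which is Gromov-hyperbolic with Gromov boundary canonically $X$; as that boundary is a quasi-isometry invariant of $G$ alone, uniqueness re-emerges independently. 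The main obstacle throughout is the coordinate bookkeeping in the two contradiction arguments: one must control, along a collapsing subnet, exactly which coordinates tend to the attracting versus the repelling point in both factors at once, and it is precisely conicality that makes these choices forced rather than merely possible.
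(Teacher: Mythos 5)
The paper offers no argument for this remark at all: it is a pointer to the literature (uniqueness is Lemma 5.1 of \cite{Bo2}, the triple-space characterization is Theorem 8.1 of \cite{Bo1}, with Tukia's companion result in \cite{Tu}, and the classical equivalence is Proposition 1.13 of \cite{Bo2} together with Theorem 0.1 of \cite{Bo1}). Your proposal instead tries to reprove these, and the two places where you locate the real work are exactly where your argument has genuine gaps. First, the substantive direction of the triple-space equivalence --- every point conical implies $G \curvearrowright X^{(3)}$ cocompact --- is the main theorem of Tukia's paper \cite{Tu} (and of Section 8 of \cite{Bo1}), and it is not obtained by the contradiction you sketch. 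Failure of cocompactness hands you a sequence of distinct triples $\theta_{n}$ with $\theta_{n} \notin G K_{n}$ for an exhaustion $(K_{n})$ of $X^{(3)}$ by compacta; it does \emph{not} hand you a wandering net of group elements to which the convergence property can be applied. The published proofs must manufacture suitable group elements from the degenerating triples and then construct a specific point of $X$ for which \emph{no} infinite set $K \subseteq G$ witnesses conicality; that construction occupies the bulk of \cite{Tu} and is not ``bookkeeping.'' Your sentence ``produces a point through which no conical net can be threaded'' names the goal, not a proof.

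Second, the injectivity crux of your uniqueness argument does not close as stated. With $(x,u),(x,v) \in \Lambda$ and $u \neq v$, conicality of $x$ in $X_{1}$ gives a net $(g_{i})$ with $g_{i}x \to b'$ and $g_{i} \to a$ uniformly off $x$; passing to a subnet collapsing in $X_{2}$ with attractor $a_{2}$ and repeller $b_{2}$, you learn only that $g_{i}(x,u)$ accumulates at $(b',a_{2})$ when $u \neq b_{2}$, and likewise for $v$: if both $u,v \neq b_{2}$, both orbits accumulate at the \emph{same} pair $(b',a_{2})$ and no contradiction with $u \neq v$ appears, while applying $g_{i}^{-1}$ only yields $(x,b_{2}) \in \Lambda$. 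To make the scheme work you would have to show that the fiber of $\pi_{1}$ over every conical point is exactly $\{b_{2}\}$, which is in effect a special case of the map-uniqueness results of \cite{Bo2} and \cite{GP2} and requires a further argument; as written, ``forces $u$ and $v$ to coincide'' is asserted, not derived. (Incidentally, uniqueness of the minimal set in $X_{1}\times X_{2}$, which you assert, is neither automatic nor needed.) Your two easy steps --- cocompactness on triples implies every point is conical, and uniqueness of the equivariant homeomorphism via density of attracting fixed points of loxodromics --- are correct and standard. But your final paragraph already invokes Theorem 0.1 of \cite{Bo1} as a black box, which is precisely what the paper's remark does for all three claims; given the depth of the results involved, citing \cite{Tu}, \cite{Bo1} and \cite{Bo2} is the correct move, and compressing their proofs into subnet sketches leaves real gaps rather than supplying an alternative proof.
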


\begin{defi}\label{relhyppair}Let $G$ be a group, $X$ a Hausdorff compact space and $\varphi: G \curvearrowright X$ a minimal action by homeomorphisms. We say that $\varphi$ is relatively hyperbolic if it has the convergence property and its limit set has only conical and bounded parabolic points. If $\mathcal{P}$ is a representative set of conjugation classes of stabilizers of bounded parabolic points of $X$, then we say that $G$ is relatively hyperbolic with respect to $\mathcal{P}$ (or equivalently that $(G,\mathcal{P})$ is a relatively hyperbolic pair). We say that $X$ is is the Bowditch boundary of the pair $(G,\mathcal{P})$ and we denote it by $X = \partial_{B}(G,\mathcal{P})$.
\end{defi}

\begin{obs}If $(G,\mathcal{P})$ is a relatively hyperbolic pair, then its Bowditch boundary is uniquely defined, up to unique equivariant homeomorphism (Corollary 6.1(e) of \cite{GP2}).

An equivalent definition is that the minimal convergence action on $X$ induces a cocompact action on the space of distinct pairs (1C of \cite{Tu} and Main Theorem of \cite{Ge1}).
\end{obs}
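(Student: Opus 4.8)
The Remark makes two assertions, and I would treat them separately: first, uniqueness of the Bowditch boundary up to \emph{unique} equivariant homeomorphism; second, the equivalence between the convergence-dynamical definition of relative hyperbolicity and cocompactness of the induced action on the space of distinct pairs.

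For the uniqueness, I would first dispatch the easy half, namely uniqueness of the homeomorphism granting its existence. If $h_{1},h_{2}\colon X_{1}\to X_{2}$ are two $G$-equivariant homeomorphisms between geometrically finite minimal convergence boundaries of $(G,\mathcal{P})$, then $f=h_{2}^{-1}\circ h_{1}$ is a $G$-equivariant self-homeomorphism of $X_{1}$. Each maximal parabolic subgroup $P$ fixes a unique point $p$, and equivariance forces $f(p)$ to be $P$-fixed, hence $f(p)=p$; likewise, for an infinite-order element $g$ with attracting/repelling pair $\{g^{+},g^{-}\}$ (produced by the convergence property), $f$ preserves the pair and, matching attracting point to attracting point, fixes each of them. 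Since the set of such fixed points is dense in a non-elementary limit set, $f=\mathrm{id}$, which gives uniqueness of the homeomorphism.

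The substance is the \emph{existence} of the homeomorphism, i.e. the rigidity statement that a geometrically finite minimal convergence action is determined by $(G,\mathcal{P})$. The plan is to compare any such action $\varphi\colon G\curvearrowright X$ with a canonical geometric model: build the cusped space of $(G,\mathcal{P})$ by attaching combinatorial horoballs to the peripheral cosets in a Cayley graph of $G$, which is a proper geodesic Gromov-hyperbolic space carrying a proper isometric $G$-action that is cocompact away from the cusps and whose Gromov boundary $\partial$ supports a geometrically finite minimal convergence action with peripheral structure $\mathcal{P}$. I would then construct a $G$-equivariant map $\partial\to X$ sending the fixed point of each parabolic to the corresponding parabolic fixed point of $\varphi$, and each conical boundary point (represented by a conical sequence $g_{n}$ in $G$) to the conical limit of the same sequence under $\varphi$, checking that this is well defined. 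Bijectivity and continuity then follow from the convergence dynamics on both sides: injectivity because distinct conical limits are pulled apart by a common sequence, surjectivity because every point of $X$ is conical or bounded parabolic, and continuity because collapsing nets map to collapsing nets. The main obstacle is the simultaneous control of the conical and parabolic regimes; the parabolic points require the bounded-parabolic hypothesis, that $Stab_{\varphi}(p)$ acts cocompactly on $X\setminus\{p\}$, to guarantee that the cusp neighbourhoods match up, and this is where the bulk of the work lies (this is Yaman's characterisation and the content of Corollary 6.1(e) of \cite{GP2}).

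For the equivalence with cocompactness on the space $\Theta=X\times X\setminus\Delta X$ of distinct pairs, I would argue both directions by extraction of collapsing nets. Assuming $\varphi$ relatively hyperbolic, suppose for contradiction that $G\curvearrowright\Theta$ is not cocompact; then there is a net of pairs $(a_{i},b_{i})$ escaping every $G$-compact subset of $\Theta$, and after applying suitable $g_{i}\in G$ and passing to a subnet the convergence property yields a collapsing net with a single point $p$ toward which both coordinates of $g_{i}(a_{i},b_{i})$ degenerate. Analysing $p$ through the dichotomy conical-or-bounded-parabolic, if $p$ is conical one uses the pulling-apart sequence to move the pairs back into a fixed compact subset of $\Theta$, while if $p$ is bounded parabolic one uses cocompactness of $Stab_{\varphi}(p)$ on $X\setminus\{p\}$ to the same end, in either case contradicting the escape, so the action on pairs is cocompact. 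Conversely, assuming $G\curvearrowright\Theta$ cocompact, I would fix a limit point $\xi$ and show it is conical unless its stabiliser is a large parabolic, and bounded parabolic otherwise: failure of conicality of $\xi$ forces, via cocompactness on pairs applied to pairs with first coordinate $\xi$, that $Stab_{\varphi}(\xi)$ acts cocompactly on $X\setminus\{\xi\}$. The main obstacle here is the net-theoretic bookkeeping in the convergence framework and the proof of the conical/parabolic dichotomy, which is the technical heart of \cite{Tu} and \cite{Ge1}.
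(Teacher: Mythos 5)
The paper never proves this remark: it is a pure citation, delegating uniqueness to Corollary 6.1(e) of \cite{GP2} and the pairs-cocompactness equivalence to 1C of \cite{Tu} and the Main Theorem of \cite{Ge1}. So your proposal must be judged as a free-standing proof attempt, and only one piece of it is actually complete: the uniqueness of the equivariant homeomorphism given its existence. That argument (a $G$-equivariant self-homeomorphism fixes each parabolic point because a parabolic subgroup of a convergence group has a unique fixed point, fixes each loxodromic pair with attracting matched to attracting, and these points are dense in a minimal non-elementary limit set) is correct and is the standard one. Everything else is an outline of the cited theorems in which the genuinely hard steps are asserted rather than proved.

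Concretely: (a) in the existence argument, the map sending a conical point of the cusped-space boundary, ``represented by a conical sequence $g_{n}$ in $G$,'' to ``the conical limit of the same sequence under $\varphi$'' presupposes that conicality and the limit are independent of the action --- but that is essentially the rigidity statement you are trying to prove, so well-definedness cannot be ``checked'' without the Floyd-map or attractor-sum machinery that \cite{GP2} develops; moreover the cusped-space model requires $G$ finitely generated, $\mathcal{P}$ finite and the compactum metrizable, whereas the remark, following \cite{GP2}, is stated for general convergence actions on Hausdorff compacta. (b) In the forward direction of the pairs equivalence, the parabolic case does not reduce to cocompactness of $Stab_{\varphi}(p)$ on $X-\{p\}$ as you claim: translating the first coordinate of $(a_{i},b_{i})$ into a fixed compact $K\subseteq X-\{p\}$ leaves the second coordinate uncontrolled, and the dangerous degeneration in $\Theta$ is toward the diagonal, i.e.\ pairs whose two entries collapse to $p$ at comparable rates; handling this is exactly Tukia's horoball-type analysis and it is not a routine net extraction. (c) The converse is the deepest point: your sentence ``failure of conicality of $\xi$ forces, via cocompactness on pairs, that $Stab_{\varphi}(\xi)$ acts cocompactly on $X\setminus\{\xi\}$'' restates the conical-or-bounded-parabolic dichotomy rather than deriving it. Gerasimov's proof is a global argument (and his theorem uses finite generation of $G$, a hypothesis your net-extraction sketch never invokes --- a sign the sketch cannot succeed as stated; note the paper itself imposes finite generation in Proposition \ref{blowupconv}). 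In short: the ``unique'' half is done, the ``homeomorphism exists'' half and both directions of the equivalence remain at the level of a plausible program whose technical core still lives in \cite{GP2}, \cite{Tu} and \cite{Ge1}, which is precisely how the paper treats it.
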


\subsubsection{The blowup construction}

Next we show the general idea of the construction of the parabolic blowup. The details are covered in \cite{So3}. However, this general idea and \textbf{Propositions \ref{surjectiveness}} and \textbf{\ref{blowupconv}} are enough information for the subsequent section of this text.

Let $G$ be a group, $Z$ a compact Hausdorff space, $\varphi: G \curvearrowright Z$ an action by homeomorphisms, $P \subseteq Z$ the set of bounded parabolic points of $\varphi$, $P' \subseteq P$ a representative set of orbits, $\mathcal{C} = \{C_{p}\}_{p\in P'}$ a family of compact Hausdorff spaces, $H = \{H_{p}\}_{p\in P'}$, with $H_{p} \subseteq G$ minimal sets such that $1\in H_{p}$ and $Orb_{\varphi|_{H_{p}\times Z}} p = Orb_{\varphi} p$ and $\{Stab_{\varphi} p+_{\partial_{p}}C_{p}\}_{p\in P'}$ a family of spaces with the equivariant perspective property with actions $\eta = \{\eta_{p}\}_{p\in P'}$, such that the action $L_{p}+\eta_{p}: Stab_{\varphi} p \curvearrowright Stab_{\varphi} p +_{\partial_{p}}C_{p}$  is by homeomorphisms (where $L_{p}$ is the left multiplication action).

We define for $p \in P'$ and $q \in Orb_{\varphi} p, \ C_{q} = C_{p}$ and define $h_{p,q}$, for $q \in Orb_{\varphi} p$, the only element of $H_{p}$ such that $\varphi(h_{p,q},p) = q$. Take, for $p \in P'$ and $q \in Orb_{\varphi} p$, the compact space $Stab_{\varphi} p +_{\partial_{q}}C_{q}$, where $\partial_{q} = (\partial_{p})_{\ast}$, is the pullback for  $\tau_{h_{p,q}^{-1}}: Stab_{\varphi}q \rightarrow Stab_{\varphi}p$, the conjugation map $\tau_{h_{p,q}^{-1}}(x) = h_{p,q}^{-1}xh_{p,q}$, and $id_{C_{p}}$.

Since $P$ is the set of bounded parabolic points, we have that $\forall p \in P$,  $\varphi|_{Stab_{\varphi} p \times (Z - \{p\})}$ is proper and cocompact, so we are able to take the space $X_{p} = (Z - \{p\}) +_{(\partial_{p})_{\Pi}} C_{p}$. We have also that the quotient map $\pi_{p}: X_{p} \rightarrow Z$, such that $\pi_{p}|_{Z - \{p\}}$ is the identity map and $\pi_{p}(C_{p}) = p$, is continuous.

Let $X = \lim\limits_{\longleftarrow}(\{X_{p}\}_{p \in P},\{\pi_{p}\}_{p \in P})$. The space $X$, together with the induced action of $G$ on it, is called parabolic blowup of $(Z,\varphi)$ by  $(\mathcal{C},\eta)$.

The next proposition characterizes blowup spaces:

\begin{prop}\label{surjectiveness}(Proposition of \cite{So3}) Let $X$ and $Z$ be Hausdorff compact topological spaces, $\psi: G \curvearrowright X$ and $\varphi: G \curvearrowright Z$ actions by homeomorphisms, $P$ the set of bounded parabolic points of $Z$ and a continuous surjective equivariant map $\pi: X \rightarrow Z$ such that $\forall x \in Z-P$, $\#\pi^{-1}(x) = 1$. For $p \in P$, take $X = W_{p}+_{\delta_{p}}\pi^{-1}(p)$, where $W_{p} = X - \pi^{-1}(p)$ and $\delta_{p}$ is the only admissible map that gives the original topology. Let also decompose $\psi|_{Stab_{\varphi} p \times X}$ as $\psi_{p}^{1}+\psi_{p}^{2}$, with respect to the decomposition of the space. Those are equivalents:

 \begin{enumerate}

    \item The map $\pi$ is topologically quasiconvex and $\forall p \in P$, $X \in EPers(\psi_{p}^{1})$.

    \item There exists an equivariant homeomorphism $T$ to a parabolic blowup $Y$ that commutes the diagram:

$$ \xymatrix{ X  \ar[rd]^{\pi} \ar[d]_{T} &  \\
             Y \ar[r]_{\pi'} & Z} $$

Where $\pi'$ is the projection map.

\end{enumerate}

\end{prop}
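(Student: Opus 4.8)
The plan is to prove the two implications separately. The direction $(2)\Rightarrow(1)$ is essentially a verification against the explicit construction of the blowup, while $(1)\Rightarrow(2)$ is a genuine reconstruction of the peripheral data followed by an inverse-limit identification; the second is where the real work lies.

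For $(1)\Rightarrow(2)$ I would read the peripheral data directly off $\pi$. Fix a representative set $P'$ of parabolic orbits. For $p\in P'$ put $C_{p}=\pi^{-1}(p)$, let $\eta_{p}=\psi_{p}^{2}$ be the induced action of $Stab_{\varphi}p$ on this fibre, and let $\partial_{p}=(\delta_{p})_{\Lambda}$ be the gluing obtained by applying the functor $\Lambda$ for the action $\psi_{p}^{1}$ on $W_{p}$ to the given decomposition $X=W_{p}+_{\delta_{p}}C_{p}$. Since $X\in EPers(\psi_{p}^{1})$ and $\Lambda$ is an isomorphism of categories onto $EPers_{0}(Stab_{\varphi}p)$, the space $Stab_{\varphi}p+_{\partial_{p}}C_{p}$ has the equivariant perspective property, so $(\mathcal{C},\eta)=(\{C_{p}\},\{\eta_{p}\})$ is admissible blowup data and yields a blowup $Y=\lim\limits_{\longleftarrow}(\{X_{p}\},\{\pi_{p}\})$ with $X_{p}=\Pi(Stab_{\varphi}p+_{\partial_{p}}C_{p})=(Z-\{p\})+_{(\partial_{p})_{\Pi}}C_{p}$.

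It remains to build the homeomorphism $T:X\to Y$. For each $p\in P$ I would define a map $T_{p}:X\to X_{p}$ equal to $\pi$ on $W_{p}$ (collapsing every fibre over $q\neq p$) and equal to the identity on $C_{p}=\pi^{-1}(p)$; since $\pi_{p}\circ T_{p}=\pi$ for every $p$, the universal property of the inverse limit assembles the $T_{p}$ into a single map $T:X\to Y$ with $\pi'\circ T=\pi$. Bijectivity of $T$ is then immediate: a compatible family in $Y$ has a common image $z\in Z$, and if $z\notin P$ the singleton-fibre condition pins down every coordinate, while if $z=p_{0}\in P$ all coordinates but the $p_{0}$-th equal $p_{0}$ and the $p_{0}$-th ranges over $C_{p_{0}}=\pi^{-1}(p_{0})$ — exactly matching the points of $X$ over $z$. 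As $X$ and $Y$ are compact Hausdorff, a continuous bijection is a homeomorphism, and equivariance is inherited from that of each $T_{p}$.

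The crux, and the only real obstacle, is the continuity of the maps $T_{p}$, and this is precisely where the hypothesis of topological quasiconvexity is consumed. By the pullback construction and the continuity criterion of Corollary \ref{continuousidentity}, continuity of $T_{p}=\pi|_{W_{p}}+id_{C_{p}}:W_{p}+_{\delta_{p}}C_{p}\to(Z-\{p\})+_{(\partial_{p})_{\Pi}}C_{p}$ is equivalent to the gluing inclusion $\delta_{p}(A)\subseteq(\partial_{p})_{\Pi}(Cl\,\pi(A))$ for every closed $A\subseteq W_{p}$. Because $\partial_{p}$ is produced by $\Lambda$ relative to the action on $W_{p}$ whereas $(\partial_{p})_{\Pi}$ is produced by $\Pi$ relative to the action on $Z-\{p\}$, the map $T_{p}$ realises the composite $\Pi\circ\Lambda$ for two distinct cocompact actions of the same group $Stab_{\varphi}p$ bridged by $\pi|_{W_{p}}$; the inclusion above says that collapsing the non-$p$ fibres through $\pi$ cannot refine the accumulation onto $C_{p}$, which is exactly the uniform control packaged in topological quasiconvexity (and is of the type compared in Proposition \ref{ezcorrespondence}). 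Establishing this inclusion cleanly is the technical heart of the argument. Finally, the reverse direction $(2)\Rightarrow(1)$ runs the same comparison forwards: starting from the constructed $X_{p}$, whose perspective structure comes from $\Pi$, one recovers the decomposition $Y=W_{p}+_{\delta_{p}}(\pi')^{-1}(p)$ with the induced $\psi_{p}^{1}$ again admitting $Y$ as a perspective compactification, and reads off quasiconvexity of $\pi'$ from the perspective property of the $X_{p}$ — all but finitely many translates of a fixed compact fibre neighbourhood become uniformly small.
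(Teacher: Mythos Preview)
This proposition is not proved in the present paper: it is quoted from \cite{So3} (note the attribution ``(Proposition of \cite{So3})'' in the statement), and no argument follows it here. There is therefore no proof in this paper to compare your proposal against.

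On its own merits, your strategy for $(1)\Rightarrow(2)$ is the natural one --- read off $C_{p}=\pi^{-1}(p)$, produce perspective data via $\Lambda$, build the blowup $Y$, define coordinate maps $T_{p}$, assemble them into $T$ via the inverse-limit universal property, and finish with compact--Hausdorff bijectivity. Two points need care. First, Corollary~\ref{continuousidentity} as stated concerns only the identity $id:X+_{f}Y\to X+_{g}Y$; your $T_{p}=\pi|_{W_{p}}+id_{C_{p}}$ changes the first factor, so you need either the general continuity criterion for maps $\psi+\phi$ between Artin--Wraith glueings (from \cite{So}) or a preliminary pullback reduction before invoking that corollary. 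Second, and more substantively, ``topologically quasiconvex'' is not defined anywhere in this paper (the definition lives in \cite{So3}), so your assertion that it is ``exactly the uniform control'' yielding the required inclusion $\delta_{p}(A)\subseteq(\partial_{p})_{\Pi}(Cl\,\pi(A))$ is a placeholder rather than an argument; to make this a proof you must state the definition and actually derive the inclusion from it. The same gap renders your $(2)\Rightarrow(1)$ sketch --- which claims to read off quasiconvexity from the perspective property of the $X_{p}$ --- incomplete as written.
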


In this case the map $\pi$ is called a blowup map.

If the action $\psi$ have the convergence property, then the conditions in (1) are not necessary, i.e. every surjective equivariant continuous map $\pi$ such that $\forall x \in Z-P$, $\#\pi^{-1}(x) = 1$ is a blowup map (Theorem 3.1 of \cite{So3}). In the special case that $\psi$ have the convergence property and $\varphi$ is relatively hyperbolic, we have that every surjective equivariant continuous map is a blowup map (Proposition 3.2 of \cite{So3}), since every limit point that is not bounded parabolic is conical and preimage of a conical point is a single point (Proposition 7.5.2 of \cite{Ge2}).

If the action $\varphi$ is relatively hyperbolic, then we have more properties:

\begin{prop}\label{blowupconv}Let $G$ be a finitely generated group, $X$ and $Z$ Hausdorff compact spaces $\psi: G \curvearrowright X$ and $\varphi: G \curvearrowright Z$ actions by homeomorphisms, $P \subseteq Z$ be the set of bounded parabolic points and $\pi: X \rightarrow Z$ be a blowup map. If $\varphi$ is minimal and relatively hyperbolic, then we have:

\begin{enumerate}
    \item (Theorem 3.9 of \cite{So3}) If $\forall p\in P$, the action of $Stab_{\varphi} \ p$ on $\pi^{-1}(p)$ have the convergence property, then $\psi$ has the convergence property.
    \item (Corollary 3.15 of \cite{So3}) If $\forall p\in P$, the action of $Stab_{\varphi} \ p$ on $\pi^{-1}(p)$ is minimal and hyperbolic, then $\psi$ is hyperbolic (and then $X \cong \partial_{\infty}G$).
    \item (Corollary 3.17 of \cite{So3}) If $\forall p\in P$, the action of $Stab_{\varphi} \ p$ on $\pi^{-1}(p)$ is minimal and relatively hyperbolic with respect to the parabolic subgroups $\mathcal{P}_{p}$, then $\psi$ is relatively hyperbolic with respect to the set of parabolic subgroups $\mathcal{P} = \bigcup\{\mathcal{P}_{p}: p \in P\}$ (and then $X \cong \partial_{B}(G,\mathcal{P})$).
\end{enumerate}

\end{prop}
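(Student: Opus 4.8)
The plan is to deduce all three statements from the downstairs dynamics. Since $\varphi$ is minimal and relatively hyperbolic, it is in particular a convergence action whose limit set is all of $Z$, and every point of $Z$ is either conical or bounded parabolic. The map $\pi$ is continuous and equivariant, so it carries $\psi$-dynamics to $\varphi$-dynamics; conversely, over each $z \in Z - P$ the fibre $\pi^{-1}(z)$ is a single point, while over each $p \in P$ it is the compact space $C_p = \pi^{-1}(p)$ carrying the action of $H_p := Stab_\varphi p$. Everything will follow once I (i) prove that $\psi$ is a convergence action, and then (ii) classify the $\psi$-limit points by transferring the relevant property from the image point and, when that point is parabolic, from the fibre action.

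For (1) I would take an arbitrary wandering net $(g_\alpha)$ in $G$ and push it down by $\pi$. By the convergence property of $\varphi$, after passing to a subnet $\varphi(g_\alpha,\cdot)$ collapses, uniformly on $Z - \{b\}$, to an attracting point $a \in Z$ with repelling point $b$; applying this to $(g_\alpha^{-1})$ records the inverse dynamics. I then lift the attracting and repelling data, producing candidate points $\tilde a \in \pi^{-1}(a)$ and $\tilde b \in \pi^{-1}(b)$. When $a$ and $b$ are not parabolic their fibres are singletons and the uniform collapse lifts directly through $\pi$. The real work is when $a$ (resp. $b$) is bounded parabolic: then $\pi^{-1}(a) = C_a$ is an entire fibre, and one must show that the images $\psi(g_\alpha,x)$ accumulate on a \emph{single} point $\tilde a \in C_a$ and, dually, that only one point $\tilde b \in C_b$ is repelling. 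Here I would use the perspective compactification $X_a = (Z - \{a\}) +_{(\partial_a)_\Pi} C_a$: since $H_a$ acts properly discontinuously and cocompactly on $Z - \{a\}$ and $X_a$ is perspective, the finiteness condition $\#\{g : \varphi(g,K)\notin Small(u)\} < \aleph_0$ forces the $\varphi$-images of any fixed compactum to become arbitrarily small, hence to be squeezed into one fibre point, while the assumed convergence property of the action of $H_a$ on $C_a$ pins down \emph{which} point and gives the dual uniqueness on $C_b$. This is the step I expect to be the main obstacle, since it requires controlling those $g_\alpha$ that do not lie in $H_a$: one factors the net, modulo the compact transversal supplied by cocompactness of $H_a$ on $Z - \{a\}$, into a part genuinely acting on the fibre and a bounded part, and then transports the fibre convergence property through this factorization.

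Granting (1), I would classify each $\psi$-limit point $x \in X$. If $\pi(x) = z \in Z - P$ is conical, take a sequence $g_n$ witnessing conicality of $z$, promote it to a $\psi$-collapsing net by (1), and match attracting and repelling data through $\pi$; equivariance and the singleton fibre force $x$ to be the exceptional point, so $x$ is conical for $\psi$. If $x$ lies in a fibre $C_p$ and is conical for the action of $H_p$ on $C_p$, the same matching argument, comparing the collapse of $H_p$ inside $C_p$ with the $\psi$-collapse it induces on all of $X$ and using the uniqueness of the attracting and repelling points of a collapsing net, shows $x$ is conical for $\psi$, with every $y \neq x$ collapsing to the common attractor. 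In case (2) the fibre actions are hyperbolic, so every fibre point is conical; since every point of $X$ projects to a conical point of $Z$ or lies in such a fibre, every point of $X$ is conical. With (1) this is precisely the statement that $\psi$ is hyperbolic, and uniqueness of the hyperbolic boundary gives $X \cong \partial_\infty G$.

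For (3) it remains to treat fibre points $q \in C_p$ that are bounded parabolic for $H_p$, with stabilizer $Q \in \mathcal{P}_p$ acting properly discontinuously and cocompactly on $C_p - \{q\}$. Equivariance of $\pi$ forces $Stab_\psi q \subseteq H_p$, hence $Stab_\psi q = Stab_{H_p} q = Q$, so no peripheral subgroups appear beyond $\bigcup_p \mathcal{P}_p$. The substantive point is to upgrade cocompactness of $Q$ on $C_p - \{q\}$ to cocompactness of $Q$ on the whole complement $X - \{q\}$: I would assemble a compact fundamental set out of a $Q$-fundamental compactum in $C_p - \{q\}$ together with the $H_p$-fundamental compactum in $Z - \{p\}$ transported to $X$ through the blowup compactification $X_p$, using perspectivity of $X_p$ to keep the complement of a fibre neighbourhood of $q$ within finitely many $Q$-translates, with proper discontinuity inherited from $H_p$. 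This cocompactness upgrade is, after the convergence step of (1), the second genuine difficulty. Combining this with the conical classification shows that the bounded parabolic points of $\psi$ are exactly those of the fibre actions, with peripheral structure $\mathcal{P} = \bigcup\{\mathcal{P}_p : p \in P\}$; minimality of $\psi$ follows from minimality of $\varphi$ and of each fibre action together with surjectivity and equivariance of $\pi$. Hence $\psi$ is relatively hyperbolic with respect to $\mathcal{P}$, and uniqueness of the Bowditch boundary yields $X \cong \partial_B(G,\mathcal{P})$.
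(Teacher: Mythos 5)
You should first note what you are being compared against: the paper itself gives no proof of this proposition — all three items are imported verbatim from \cite{So3} (Theorem 3.9 and Corollaries 3.15 and 3.17 there), so your sketch stands or falls on its own. Its architecture is the expected one (lift collapsing nets through $\pi$, classify limit points fiberwise), but at precisely the two places you flag as "the main obstacle" and "the second genuine difficulty" you assert mechanisms that, as stated, do not work. In the convergence step, when the attracting point $a$ is parabolic you claim perspectivity "forces the $\varphi$-images of any fixed compactum to become arbitrarily small, hence to be squeezed into one fibre point." But smallness in $Z$ near $a$ does \emph{not} imply smallness in $X_{a}$: the whole point of the blowup is that a set shrinking to $a$ in $Z$ can accumulate on a large portion of $C_{a}$, or on all of it (a small circle around a parabolic fixed point in $S^{2}$ has diameter tending to $0$ in $Z$ while its closure in $X_{a}$ meets every "direction" of $C_{a}$). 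Perspectivity controls translates of a fixed compactum of $X_{a}$ under elements of $H_{a} = Stab_{\varphi}a$ only, and your net $(g_{\alpha})$ does not lie in $H_{a}$. The "factorization modulo a compact transversal" you invoke is exactly what has to be constructed: one writes $g_{\alpha} = h_{\alpha}f_{\alpha}$ with $h_{\alpha} \in H_{a}$ and $\varphi(f_{\alpha}, z_{0})$ in a fixed compactum $K \subseteq Z - \{a\}$, then runs a genuine case analysis on $(f_{\alpha})$ (finitely many values versus wandering; attractor in $Z - \{a\}$ versus equal to $a$), and a dual analysis on the repelling side, where one must show the points of $C_{b} - \{\tilde{b}\}$ are attracted even though $g_{\alpha}$ does not preserve the fiber $C_{b}$ (it sends it to other fibers, whose smallness must be checked in \emph{every} coordinate $X_{p}$ of the inverse limit, not just in $X_{a}$). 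None of this is carried out, and it is the entire content of Theorem 3.9 of \cite{So3}.

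The second gap is the cocompactness upgrade in (3). Your proposed fundamental set — a $Q$-compactum in $C_{p} - \{q\}$ together with the transported $H_{p}$-fundamental compactum $\pi^{-1}(K)$ — fails as described, because $Q = Stab_{\psi}q$ typically has infinite index in $H_{p}$, so $\psi(Q, \pi^{-1}(K))$ covers only an infinite-index family of translates of $\pi^{-1}(K)$, not all of $X - C_{p}$. One must adjoin a transversal $\{h_{i}\}$ of $Q \backslash H_{p}$ and prove that the union $\bigcup_{i}\psi(h_{i}, \pi^{-1}(K))$ has compact closure inside $X - \{q\}$; an arbitrarily chosen transversal can perfectly well have translates accumulating at $q$ itself, so the choice must be engineered using cocompactness of $Q$ on $C_{p} - \{q\}$ together with perspectivity, and proper discontinuity of $Q$ on $X - \{q\}$ must be verified separately. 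Your one-sentence assembly asserts the conclusion rather than proving it. Two smaller omissions: you never use the hypothesis that $G$ is finitely generated (which the characterizations of relative hyperbolicity via convergence dynamics rely on), and your collapsing-net analysis silently assumes $a \neq b$; the degenerate case needs its own treatment. So the proposal is a correct plan with the two hard steps left open, not a proof.
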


\begin{obs}Theorem 3.9 of \cite{So3} is stated for geometric compactification instead of minimal relatively hyperbolic action, which is more general.

Corollary 3.17 of \cite{So3} has as immediate consequence a Drutur and Sapir Theorem (Corollary 1.14 of \cite{DS}) which says that if $G$ is relatively hyperbolic with respect to $\mathcal{P}$ and every group H in $\mathcal{P}$ is relatively hyperbolic with respect to $\mathcal{P}_{H}$, then $G$ is relatively hyperbolic with respect to $\bigcup_{H \in \mathcal{P}} \mathcal{P}_{H}$.

\end{obs}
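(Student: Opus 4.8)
The plan is to exhibit the refined peripheral structure $\bigcup_{H \in \mathcal{P}} \mathcal{P}_H$ as the peripheral structure of a single parabolic blowup and then invoke Proposition \ref{blowupconv}(3). Begin with the Bowditch boundary $Z = \partial_B(G,\mathcal{P})$ and its minimal relatively hyperbolic convergence action $\varphi: G \curvearrowright Z$, and let $P \subseteq Z$ be the set of bounded parabolic points. By Definition \ref{relhyppair} the groups of $\mathcal{P}$ are representatives of the conjugacy classes of the stabilizers $Stab_{\varphi}\, p$, $p \in P$. For each $p \in P$ the stabilizer $H_p = Stab_{\varphi}\, p$ is conjugate in $G$ to some $H \in \mathcal{P}$, so by hypothesis $H_p$ is relatively hyperbolic with respect to a set $\mathcal{P}_{H_p}$ (a $G$-conjugate of the corresponding $\mathcal{P}_H$); in particular $H_p$ acts minimally and relatively hyperbolically on its Bowditch boundary $C_p := \partial_B(H_p, \mathcal{P}_{H_p})$.

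I would then feed these boundaries into the blowup construction preceding Proposition \ref{surjectiveness}. Put $\mathcal{C} = \{C_p\}$ and let $\eta_p: H_p \curvearrowright C_p$ be the relatively hyperbolic (hence convergence) action. The construction requires each $H_p +_{\partial_p} C_p$ to be an equivariant perspective compactification of $H_p$; this compactification is the attractor-sum $H_p +_{f_c} C_p$ of Definition \ref{attractorsumcomp}, taking a single element of $H_p$ as a compact fundamental domain. Forming the parabolic blowup produces a Hausdorff compact space $X$, an action $\psi: G \curvearrowright X$ by homeomorphisms, and a blowup map $\pi: X \to Z$ with $\pi^{-1}(p) = C_p$ for $p \in P$ and singleton fibres over $Z - P$.

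The conclusion is now immediate from Proposition \ref{blowupconv}(3): the action $\varphi$ is minimal and relatively hyperbolic, and for every $p \in P$ the action of $Stab_{\varphi}\, p$ on $\pi^{-1}(p) = C_p$ is minimal and relatively hyperbolic with respect to $\mathcal{P}_{H_p}$, so $\psi$ is relatively hyperbolic with respect to $\bigcup\{\mathcal{P}_p : p \in P\}$ and $X \cong \partial_B(G, \bigcup\{\mathcal{P}_p\})$. It remains to read off the peripheral set up to conjugacy: the sets $\mathcal{P}_p$ attached to points of a single $G$-orbit are $G$-conjugate, and the $G$-orbits of $P$ are indexed by the conjugacy classes of $\mathcal{P}$, so a set of conjugacy-class representatives for $\bigcup\{\mathcal{P}_p : p \in P\}$ is exactly $\bigcup_{H \in \mathcal{P}} \mathcal{P}_H$. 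Hence $G$ is relatively hyperbolic with respect to $\bigcup_{H \in \mathcal{P}} \mathcal{P}_H$, which is the theorem of Drutu and Sapir (Corollary 1.14 of \cite{DS}).

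The only real content beyond bookkeeping lies in the second paragraph: checking that the attractor-sum $H_p +_{f_c} C_p$ genuinely satisfies the equivariant perspective property demanded by the blowup construction, so that $X$ and $\pi$ exist as stated. This is where I expect the main obstacle, and it is resolved by the fact (from the theory of \cite{So2}) that the attractor-sum compactification of a group by a boundary on which it acts with the convergence property is perspective; granted this, Proposition \ref{blowupconv}(3) carries out all of the dynamical work of promoting the fibrewise relative hyperbolicity to relative hyperbolicity of $G$ itself.
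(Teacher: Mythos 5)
Your proposal is correct and is precisely the argument the paper intends: the remark treats Drutu--Sapir as an ``immediate consequence'' of Corollary 3.17 of \cite{So3}, meaning exactly your construction --- blow up $\partial_{B}(G,\mathcal{P})$ with fibers the Bowditch boundaries $\partial_{B}(H_{p},\mathcal{P}_{H_{p}})$ via the attractor-sum compactifications and apply Proposition \ref{blowupconv}(3) --- and this is the same pattern the paper itself uses at the start of the proof of Theorem \ref{sierpinskihyp}. Your one flagged obstacle, the perspectivity of the attractor-sum $H_{p}+_{f_{c}}C_{p}$, is indeed supplied by \cite{So2}/\cite{So3} and is implicitly relied on throughout the paper (e.g.\ in Lemma \ref{finiteindexhyp}), so nothing is missing beyond the standing finite-generation hypothesis already built into Proposition \ref{blowupconv}.
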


\section{Blowing up spheres}
\label{blowingupspheres}

\begin{prop}(Tshishiku-Walsh \cite{TW2} for $n = 2$, de S. \cite{So4} for $n \neq 2,4$) Let $P$ be a countable dense subset of the sphere $S^{n}$, with $n \neq 4$. Let, for every $p \in P$, $\pi_{p}: D^{n} \rightarrow S^{n}$ be the map that collapses the boundary of the $n$-ball to the point $p$. Then the limit space $\lim\limits_{\longleftarrow} \{D^{n},\pi_{p}\}_{p \in P}$ is homeomorphic to the $n-1$-dimensional Sierpi\'nski carpet.
\end{prop}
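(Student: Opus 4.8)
The plan is to present the inverse limit as a concrete subset of $S^{n}$ and then recognize that subset as the carpet by its classical topological characterization. Fix an enumeration $P = \{p_{1},p_{2},\dots\}$ and read $Y := \lim\limits_{\longleftarrow}\{D^{n},\pi_{p}\}_{p\in P}$ as the space obtained from $S^{n}$ by blowing up every point of $P$: the single-point blow-up of $p_{i}$ is $\pi_{p_{i}}\colon D^{n}\to S^{n}$, the projection $\pi\colon Y\to S^{n}$ collapses, for each $i$, a copy of $S^{n-1}$ lying over $p_{i}$ to the point $p_{i}$, and $\pi$ is injective over $S^{n}-P$. As an inverse limit of compact connected metric spaces along surjective bonding maps, $Y$ is automatically a nonempty compact metric continuum, so all the work is in pinning down its homeomorphism type.

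I would split the argument in two. The first, dimension-independent step identifies $Y$ with a specific \emph{holed sphere}. Choose once and for all a null sequence of pairwise disjoint, tamely (bicollared) embedded closed round balls $\overline{B_{i}}\subseteq S^{n}$ with $p_{i}\in \mathrm{int}(B_{i})$, with pairwise disjoint boundary spheres, and with $\mathrm{diam}(B_{i})\to 0$; density of $P$ lets me arrange that $\bigcup_{i}\mathrm{int}(B_{i})$ is dense. Put $\mathcal{S}:=S^{n}-\bigcup_{i}\mathrm{int}(B_{i})$. Filling in all but finitely many holes gives, for each finite $F\subseteq P$, a copy of the finite blow-up $Y_{F}$ (for $|F|=1$ this is just $S^{n}$ minus an open round ball, i.e. $D^{n}$, and the filling map is exactly $\pi_{p}$), and these maps are compatible, so they induce a continuous map $\mathcal{S}\to Y$. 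It is a bijection: a point of $\mathcal{S}$ off every $\partial B_{i}$ has a unique thread, while $\partial B_{i}\cong S^{n-1}$ matches the blown-up sphere over $p_{i}$. A continuous bijection of compact Hausdorff spaces is a homeomorphism, so $Y\cong \mathcal{S}$. The null-sequence condition is also what makes $Y$ locally connected, since for every $\varepsilon>0$ only finitely many blown-up spheres have diameter exceeding $\varepsilon$.

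The second step recognizes $\mathcal{S}$ as the standard $n-1$-dimensional carpet, and this is where the dimension matters. For $n=2$ this is Whyburn's theorem: $\mathcal{S}$ is a compact, connected, locally connected, nowhere dense subset of $S^{2}$ whose complementary domains are bounded by disjoint simple closed curves of diameters tending to $0$, and any two such sets are homeomorphic. For $n\ge 3$ I would invoke Cannon's characterization of the $n-1$-dimensional carpet as the unique nowhere dense subset of $S^{n}$ whose complement is a dense null family of disjoint open sets with disjoint closures, each closure a tame $n$-cell; our round balls with their bicollars supply exactly this data.

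The hard part is this recognition/tameness step, and it is precisely what fails in dimension $4$: one needs that a bicollared $S^{3}\subseteq S^{4}$ bounds a cell and that the $3$-dimensional carpet is characterized as above, both of which rest on Schoenflies- and recognition-type results unavailable there. This is why the hypothesis $n\neq 4$ cannot be removed by this method. Away from $n=4$ the remaining verifications are routine once the null sequence of round balls is chosen; the only genuine care is in the local-connectedness argument, where one checks that the uniform shrinking of the blow-up spheres makes small connected neighbourhoods in $S^{n}$ pull back to small connected neighbourhoods of $Y$.
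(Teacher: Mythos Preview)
Your first step contains a genuine obstruction that cannot be repaired as written. You ask for pairwise disjoint closed balls $\overline{B_{i}}\subseteq S^{n}$ with $p_{i}\in\mathrm{int}(B_{i})$, but the set $P$ is \emph{dense}: as soon as you pick any ball $B_{1}$ with $p_{1}\in\mathrm{int}(B_{1})$, density forces some other $p_{j}$ to lie in $\mathrm{int}(B_{1})$, and then the required ball $B_{j}\ni p_{j}$ cannot be disjoint from $B_{1}$. So the family $\{\overline{B_{i}}\}$ you need simply does not exist, and the identification $Y\cong\mathcal{S}$ never gets off the ground. Relatedly, even if one tried to salvage the map $\mathcal{S}\to Y$ by sending $x\in\mathcal{S}\setminus\bigcup\partial B_{i}$ to the thread over $x$ itself, this would miss all threads lying over points of $(S^{n}\setminus P)\cap\bigcup\mathrm{int}(B_{i})$, so it is not even a bijection on the set level.

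The paper does not prove this proposition here; it quotes it from \cite{TW2} and \cite{So4}. In those references the argument does not try to carve the balls around the prescribed dense set $P$. One route is to go in the opposite direction: start from a standard carpet $\mathcal{S}=S^{n}\setminus\bigcup_{i}\mathrm{int}(B_{i})$ (with the $B_{i}$ a null family of disjoint tame cells whose union is dense), collapse each $\partial B_{i}$ to a point, use decomposition-space results to see the quotient is $S^{n}$, and observe that the images of the collapsed spheres form \emph{some} countable dense set $P'$; the quotient map is then exactly the blow-up map for $P'$. The passage from $P'$ to an arbitrary countable dense $P$ uses the classical fact that any two countable dense subsets of $S^{n}$ differ by an ambient homeomorphism. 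Alternatively, one verifies Whyburn's or Cannon's characterization directly on the inverse limit without ever embedding it back into the original sphere. Either way, the crucial point your sketch misses is that the holes of the carpet are not indexed by the given dense set in any compatible geometric way; decoupling the holes from $P$ is where the real work lies.
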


\begin{obs}It is not clear if this proposition holds for $n = 4$.
\end{obs}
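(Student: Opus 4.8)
To substantiate this remark it is worth isolating exactly where the argument that proves the proposition for $n \neq 4$ breaks down, since the plan for $n = 4$ would be identical up to that point. The plan has two stages: first identify the inverse limit concretely, then recognize it via a positional characterization. For the identification, observe that a point of $\lim\limits_{\longleftarrow}\{D^{n},\pi_{p}\}_{p\in P}$ is a choice, for each $p \in P$, of a point $x_{p} \in D^{n}$ with all the values $\pi_{p}(x_{p})$ equal to a common $z \in S^{n}$. If $z \notin P$ then every fiber $\pi_{p}^{-1}(z)$ is a single interior point, so the thread is determined by $z$; if $z = p_{0} \in P$, then $x_{p_{0}}$ ranges freely over $\pi_{p_{0}}^{-1}(p_{0}) = \partial D^{n} \cong S^{n-1}$, while every other coordinate is again forced. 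Hence the limit projects onto $S^{n}$ with an $(n-1)$-sphere over each $p \in P$ and a single point elsewhere; equivalently it is homeomorphic to $S^{n}$ with a countable dense null family of disjoint open $n$-balls removed, the complementary spheres being flat and their closures disjoint. This description is insensitive to the dimension, and I would expect it to be carried out verbatim for $n = 4$.

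The second and decisive stage is to recognize such a subset of $S^{n}$ as the standard $(n-1)$-dimensional Sierpi\'nski carpet. For $n \neq 4$ this is exactly the positional characterization of the $(n-1)$-dimensional Sierpi\'nski curve in $S^{n}$ (see \cite{Ca} and \cite{So4}): a compact $(n-1)$-dimensional subset of $S^{n}$ whose complementary domains are bounded by flat, pairwise disjoint $(n-1)$-spheres forming a null sequence with dense union is homeomorphic to the carpet, and any two such subsets are ambiently equivalent. The proof proceeds by an infinite shrinking argument that pushes the complementary $n$-balls to be small through ambient homeomorphisms, matching the balls of one copy to those of another; controlling these isotopies rests on the fact that a flat $(n-1)$-sphere bounds a flat $n$-ball on each side, together with general-position and engulfing techniques used to straighten and shrink the balls one at a time while keeping the remaining ones under control.

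The main obstacle, and the reason the $n=4$ case is only a hypothesis (it is assumed in case $3$ of Theorem \ref{sierpinskihyp}), is that precisely these supporting tools are either false or unresolved in dimension $4$. The piecewise-linear and smooth Schoenflies problems are open in dimension $4$, so one cannot freely assert that the complementary $3$-spheres bound standard $4$-balls; the Whitney trick, general position, and engulfing that drive the shrinking argument fail smoothly in dimension $4$, and Freedman's topological $4$-manifold machinery, while powerful, does not obviously furnish the \emph{controlled, ambient, simultaneous} version needed to handle a dense null family of balls at once. Consequently the reduction above stalls exactly at the positional characterization, and there is at present no alternative purely topological characterization of the abstract space in this dimension to take its place. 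The honest conclusion is therefore the one recorded in the remark: the identification stage is entirely dimension-independent, but the recognition stage depends on a $4$-dimensional statement whose status is unknown, so it is unclear whether the proposition holds for $n = 4$.
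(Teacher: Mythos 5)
Correct, and essentially the paper's own reasoning: the remark carries no proof, but the paper's introduction attributes the $n\neq 4$ restriction precisely to the positional/topological characterizations of the carpet in \cite{Ca}, \cite{Wh} and \cite{So4}, ``where the same problem with the dimension appears,'' which is exactly the recognition stage you isolate. Your account of why that stage stalls in dimension $4$ (open PL/smooth Schoenflies, failure of engulfing and general position, no controlled ambient substitute from topological $4$-manifold theory) is the accepted explanation, and your observation that the set-theoretic identification of the inverse limit is dimension-independent while the recognition of the resulting subset of $S^{n}$ is not matches how the cited proofs are structured.
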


\begin{prop}\label{blowupsierpisnki}Let $G$ be a group that has an action by homeomorphisms on $S^{n}$. Let $P \subseteq S^{n}$ be the set of bounded parabolic points, $P' \subseteq P$ be a set of representatives and $\pi: X \rightarrow S^{n}$ a blowup map. Let $Q' \subseteq P'$, $Q = Orb_{\varphi} Q' \subseteq P$ and suppose that $Q$ is countable and dense on $S^{n}$. If $n = 4$, then we also suppose that the last proposition holds for dimension $4$. If $\forall p \in Q'$, $X_{p}$ is the closed ball $D^{n}$ with the boundary equal to the preimage of $p$ and $\forall p \in P' - Q'$, $X_{p}$ is homeomorphic to $S^{n}$ and the map to $S^{n}$ is a homeomorphism, then $X$ is homeomorphic to the Sierpi\'nski carpet of dimension $n-1$.
\end{prop}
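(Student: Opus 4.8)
The plan is to identify the blowup space $X$ with an inverse limit of copies of $S^n$ and then massage that inverse limit into exactly the shape of the limit space appearing in the preceding proposition, so that the topological characterization of the Sierpi\'nski carpet applies directly. Recall from the construction of the blowup that $X = \lim\limits_{\longleftarrow}(\{X_p\}_{p\in P}, \{\pi_p\}_{p\in P})$, where each bonding map realizes a collapse over the fiber of a single parabolic point. My first step is to argue that only the points of $Q$ contribute nontrivially to this inverse limit: by hypothesis, for $p \in P' - Q'$ the space $X_p$ is homeomorphic to $S^n$ with the projection $\pi_p$ a homeomorphism, so inserting these factors does not change the inverse limit up to homeomorphism. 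Hence I may restrict the diagram to the index set $Q$, obtaining $X \cong \lim\limits_{\longleftarrow}(\{X_p\}_{p\in Q}, \{\pi_p\}_{p\in Q})$, where now each $X_p$ for $p \in Q$ is the closed ball $D^n$ whose boundary is exactly $\pi^{-1}(p)$, and $\pi_p \colon D^n \to S^n$ collapses that boundary sphere to the point $p$.

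Second, I would verify that this restricted diagram is, up to homeomorphism of each factor commuting with the bonding maps, precisely the diagram $\{D^n, \pi_p\}_{p\in Q}$ from the previous proposition: a countable dense set $Q \subseteq S^n$, one copy of $D^n$ per point, each mapping to $S^n$ by collapsing $\partial D^n$ to $p$. The density and countability of $Q$ are given by hypothesis, and the boundary-collapsing description of each $\pi_p$ matches verbatim. The key point requiring care is that the inverse limit over the full poset of finite subsets of $Q$ (as produced by the blowup construction) agrees with the limit of the single-collapse system described in the cited proposition; this is a cofinality/compatibility check on the two inverse systems, using that inverse limits of compact Hausdorff spaces are computed over any cofinal subfamily and that the relevant collapses over distinct points of $Q$ commute in the appropriate sense.

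Third, having matched the diagrams, I would invoke the preceding proposition (Tshishiku--Walsh for $n=2$, de S. for $n \neq 2,4$, and the assumed extension for $n=4$) to conclude that $\lim\limits_{\longleftarrow}\{D^n,\pi_p\}_{p\in Q}$ is homeomorphic to the $(n-1)$-dimensional Sierpi\'nski carpet, whence so is $X$. The hypothesis that the characterization holds in dimension $4$ is exactly what lets the same argument run uniformly across all $n$.

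I expect the main obstacle to be the second step: carefully justifying that the blowup's inverse system over the directed set of finite subsets of $P$, after discarding the $P'-Q'$ factors, is cofinally equivalent to the one-map-per-point system of the quoted proposition. One must confirm that the fiber over each $p \in Q$ is genuinely a boundary sphere bounding the ball $X_p$ (so that the bonding map really is the stated boundary collapse and not some more complicated quotient), and that reindexing does not disturb the limit. Once this identification is in place, the conclusion is immediate from the cited characterization, so the real content of the argument is bookkeeping about the inverse system rather than any new topological input.
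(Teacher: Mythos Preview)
Your proposal is correct and follows the same route as the paper, whose entire proof reads ``It is immediate from the last proposition.'' Your concern about reconciling a ``poset of finite subsets'' system with a ``one-map-per-point'' system is unnecessary: both the blowup construction and the cited proposition use the same fan-shaped diagram with all $\pi_p$ landing in the common base $S^n$, so once the $P'-Q'$ factors (which are homeomorphisms) are discarded the two inverse systems are literally identical.
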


\begin{proof}It is immediate from the last proposition.
\end{proof}

Tshishiku and Walsh showed that if $(G,\mathcal{P})$ is a relatively hyperbolic pair with its Bowditch boundary homeomorphic to the sphere $S^{2}$ and $X$ then the Dahmani boundary $\partial_{D}(G,\mathcal{P})$ is the $1$-dimensional Sierpi\'nski carpet (Theorem 1.1 of \cite{TW}). Our methods used next are similar to theirs.

What we do next is to establish the necessary condition to prove the \textbf{Theorem \ref{sierpinskihyp}}.

\begin{lema}\label{finiteindexhyp}Let $G$ be a group, $H$ a finite index subgroup of $G$, $X$ a proper $\delta$-hyperbolic space, $\varphi: G \curvearrowright X$ a properly discontinuous cocompact action and $\varphi'$ the restriction of $\varphi$ to $H$. If $\varphi'$ acts by isometries, then the induced functor $\Pi$ sends the hyperbolic compactification of $G$ to the hyperbolic compactification of $X$.
\end{lema}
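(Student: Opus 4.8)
The plan is to reduce the statement to the uniqueness of the attractor-sum compactification (the remark following Definition \ref{attractorsumcomp}, due to Gerasimov \cite{Ge2}) by transporting the hyperbolic compactification of $X$ back to $G$ through the functor $\Lambda$ and identifying it with the hyperbolic compactification of $G$. Throughout I write $\bar{G} = G +_{\partial}\partial_\infty G$ and $\bar{X} = X \cup \partial_\infty X$ for the two hyperbolic compactifications, so that the claim is $\Pi(\bar{G}) = \bar{X}$.

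First I would fix a basepoint $x_0$ and a fundamental domain $K$ and set up the geometry. Since $\varphi'$ is a properly discontinuous, cocompact action of $H$ by isometries on the proper $\delta$-hyperbolic space $X$, the \v{S}varc--Milnor lemma makes $H$ finitely generated and the orbit map $o|_H \colon H \to X$, $h \mapsto \varphi(h,x_0)$, an $H$-equivariant quasi-isometry; hence $H$ is hyperbolic and $\partial_\infty H \cong \partial_\infty X$. As $[G:H] < \infty$, the inclusion $H \hookrightarrow G$ is a quasi-isometry, so $G$ is hyperbolic and $\partial_\infty G \cong \partial_\infty H \cong \partial_\infty X$. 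The isometry hypothesis is used precisely to upgrade $o$ to a quasi-isometry on all of $G$: writing $g = h t_i$ over a finite set of coset representatives $t_i$, the estimate $d_X(o(g),o(h)) = d_X(\varphi(h,\varphi(t_i,x_0)),\varphi(h,x_0)) = d_X(\varphi(t_i,x_0),x_0)$ is bounded exactly because $\varphi(h,-)$ is an isometry. From $\varphi(g,-)\circ o = o\circ L_g$ it then follows that each $\varphi(g,-)$ is a quasi-isometry of $X$, so $\varphi$ extends to $\partial_\infty X$, and the extension of $o$ conjugates this boundary action to the standard action of $G$ on $\partial_\infty G$.

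Next I would identify $\bar{X}$ as the attractor-sum compactification of $\varphi$ with the convergence action $G \curvearrowright \partial_\infty X$. The geometric $H$-action gives a convergence action of $H$ on $\bar{X}$ (Definition \ref{hypgroup} and \cite{Bo1}); a wandering net in $G$ has, after passing to a subnet, all of its terms in a single coset $Ht$, so it equals $\varphi(h_i,-)\circ\varphi(t,-)$ with $(h_i)$ wandering in $H$, and precomposition by the fixed homeomorphism $\varphi(t,-)$ preserves the collapsing behaviour. Hence $G \curvearrowright \bar{X}$ is a convergence action, so $\bar{X}$ is the attractor-sum; in particular it is perspective, i.e.\ an object of $EPers_0(\varphi)$.

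Finally I would apply $\Lambda$. By Theorem 3.2 of \cite{So}, $\Lambda(\bar{X}) \in EPers_0(G)$ is a perspective compactification of $G$ with remainder $\partial_\infty X \cong \partial_\infty G$ carrying the standard action. The heart of the argument is to check that $\Lambda(\bar{X})$ has the convergence property, after which the uniqueness of the attractor-sum for the left multiplication action forces $\Lambda(\bar{X}) = \bar{G}$, and applying $\Pi$ (with $\Pi\circ\Lambda = \mathrm{id}$) yields $\Pi(\bar{G}) = \bar{X}$. For the convergence check one uses the definition of $\Lambda$ (Definition \ref{pielambda}): by construction a net $g_i \to y \in \partial_\infty X$ in $\Lambda(\bar{X})$ exactly when $\varphi(g_i,K) \to y$ in $\bar{X}$, equivalently $o(g_i) \to y$. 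Thus if $(g_i)$ is wandering and $\varphi(g_i,-)$ collapses on $\bar{X}$ to a pair $(a,b)$, then for each $g$ the point $\varphi(g,x_0) \in X \subseteq \bar{X}\setminus\{b\}$ gives $o(g_i g) = \varphi(g_i,\varphi(g,x_0)) \to a$, so $g_i g \to a$ in $\Lambda(\bar{X})$; together with the unchanged boundary dynamics this shows $(g_i)$ collapses on $\Lambda(\bar{X})$. The main obstacle is exactly this last transfer: making rigorous how the glueing map $f_\Lambda$ encodes convergence of group nets through the orbit map, so that a collapsing net on $\bar{X}$ yields one on $\Lambda(\bar{X})$ uniformly on finite subsets of $G$. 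Once this is in place, the uniqueness of the attractor-sum (or, at the level of topologies, Corollary \ref{continuousidentity} comparing the two glueings) identifies $\Lambda(\bar{X})$ with $\bar{G}$ and completes the proof.
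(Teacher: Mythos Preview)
Your plan is sound and would go through, but it takes the long way around compared with the paper. The paper applies $\Pi$ directly to $\bar G = G+_{\partial_\infty}\partial_\infty G$, obtaining $W = X+_f\partial_\infty G$; the key observation is that $\Pi$ \emph{is} the attractor-sum construction, so the $G$-action on $W$ automatically has the convergence property (Proposition~8.3.1 of \cite{Ge2}). Restricting to $H$ still gives a convergence action, and since $\partial_\infty G = \partial_\infty H$ and $H$ acts on $X$ properly discontinuously, cocompactly and by isometries, uniqueness of the attractor-sum for the $H$-action forces $W = \bar X$. Thus the paper never needs to extend $\varphi$ to a $G$-action on $\bar X$, never checks perspectivity of $\bar X$ for $G$, and never transfers convergence through $\Lambda$; all the comparison happens on the $X$-side and only for $H$, where the standard hyperbolic theory applies verbatim.

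Your route instead first manufactures a $G$-convergence action on $\bar X$ via the coset decomposition, then pulls back through $\Lambda$, and finally re-verifies convergence on $\Lambda(\bar X)$ in order to invoke attractor-sum uniqueness on the $G$-side. This is correct, and the ``main obstacle'' you flag (that collapsing nets on $\bar X$ induce collapsing nets on $\Lambda(\bar X)$) is exactly the statement that $\Lambda$ preserves the convergence property, which is the inverse of what the paper cites for $\Pi$; so it can be closed. The trade-off is that you do by hand (extending $o$ to a quasi-isometry on $G$, checking $\varphi(g,-)$ is a quasi-isometry for $g\notin H$, verifying perspectivity) what the paper sidesteps by staying with $H$. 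The paper's approach is shorter and uses the isometry hypothesis only implicitly (to know that $H\curvearrowright\bar X$ is convergence), while yours makes the role of that hypothesis very explicit in the $d_X(\varphi(t_i,x_0),x_0)$ estimate.
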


\begin{obs}$G$ is hyperbolic since $H$ is hyperbolic and it has a finite index.
\end{obs}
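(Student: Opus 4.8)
The plan is to show that both $\Pi(\bar{G})$ and the hyperbolic compactification $\bar{X} = X \cup \partial_{\infty}X$ are the attractor-sum compactification of $X$ determined by one and the same convergence action on the boundary, and then to invoke uniqueness. First I would record the group-theoretic input. Since $H$ has finite index in $G$ and $\varphi$ is properly discontinuous and cocompact, the restriction $\varphi'$ is also properly discontinuous and cocompact; as it acts by isometries on the proper $\delta$-hyperbolic space $X$, the \v{S}varc--Milnor lemma gives that $H$ is hyperbolic and that the orbit map $H \to X$ is a quasi-isometry inducing an equivariant homeomorphism $\partial_{\infty} H \cong \partial_{\infty} X$. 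Because $[G:H] < \aleph_{0}$, $G$ is hyperbolic (as in the Remark) and the inclusion $H \hookrightarrow G$ is a quasi-isometry, so $\partial_{\infty} G \cong \partial_{\infty} H \cong \partial_{\infty} X$. The real content of the lemma is to make this identification compatible with $\Pi$, and for that I must first turn the boundary homeomorphism into a statement about $G$-actions.

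The key step, and the only place the isometry hypothesis is genuinely used, is to promote the isometric $H$-action to a full $G$-action on $\bar{X}$ and to check that the induced action on $\partial_{\infty}X$ is a convergence action. Let $N = \bigcap_{g \in G} gHg^{-1}$; this is a finite-index normal subgroup of $G$ contained in $H$, so $\varphi|_{N}$ is geometric and by isometries. Fix $x_{0} \in X$. For $g \in G$ and $n \in N$ one has $\varphi(g,\varphi(n,x_{0})) = \varphi(gn,x_{0}) = \varphi(c_{g}(n),\varphi(g,x_{0}))$, where $c_{g}(n) = gng^{-1} \in N$ defines an automorphism $c_{g}$ of $N$. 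Since $c_{g}$ is a self-quasi-isometry of the finitely generated group $N$, and since $n \mapsto \varphi(n,x_{0})$ and $n \mapsto \varphi(n,\varphi(g,x_{0}))$ are quasi-isometries with coarsely dense image, the homeomorphism $\varphi(g,-)$ coarsely agrees with a composite of quasi-isometries on the coarsely dense net $\{\varphi(n,x_{0})\}_{n \in N}$, hence is itself a quasi-isometry of $X$. Therefore every $\varphi(g,-)$ extends to a homeomorphism of $\bar{X}$, giving an action $G \curvearrowright \bar{X}$. The induced action $G \curvearrowright \partial_{\infty}X$ has the convergence property with every point conical, since it extends the hyperbolic action of the finite-index geometric subgroup $N$; by uniqueness of the hyperbolic boundary (Lemma 5.1 of \cite{Bo2}, cf. Definition \ref{hypgroup}) it is equivariantly homeomorphic to $\partial_{\infty}G$.

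To place $\bar{X}$ in $EPers(\varphi)$ I would verify perspectivity. The isometric geometric $N$-action makes the Gromov compactification perspective with respect to $N$, because compact sets translated toward infinity shrink in the visual metric; and since $G = \bigsqcup_{i} g_{i}N$ is a finite union of cosets and each $\varphi(g_{i},-)$ is a homeomorphism of the compact metrizable space $\bar{X}$, hence uniformly continuous, the finiteness condition in the definition of perspectivity passes from $N$ to $G$. Thus $\bar{X} \in EPers(\varphi)$, and being a perspective compactification whose remainder carries the convergence action $G \curvearrowright \partial_{\infty}X$, it is the attractor-sum compactification of $X$ for that action (Definition \ref{attractorsumcomp}, with uniqueness from \cite{Ge2}). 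On the other side, $\bar{G}$ is the attractor-sum of the convergence action $G \curvearrowright \partial_{\infty}G$. Applying the isomorphism of categories $\Pi \colon EPers(G) \to EPers(\varphi)$, the object $\Pi(\bar{G})$ is a perspective compactification of $X$ whose remainder is $\partial_{\infty}G \cong \partial_{\infty}X$ equipped with the same convergence action; by the same uniqueness of the attractor-sum it coincides with $\bar{X}$, which is the assertion.

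The main obstacle is the second step: upgrading the purely homeomorphic $G$-action to a well-defined boundary action and, in particular, a \emph{convergence} action, when only the finite-index subgroup $H$ is assumed isometric. This is exactly what the isometry hypothesis buys us, and the normal-core/automorphism computation is how I would extract it. A secondary, more formal point is to confirm that $\Pi$ carries the attractor-sum to the attractor-sum, i.e.\ that perspectivity together with a convergence action on the common remainder pins down the compactification on both the $G$-side and the $X$-side; here I would lean on the perspectivity theory of \cite{So2}, or, absent a ready-made statement, compare the two glueing maps directly via Corollary \ref{continuousidentity}.
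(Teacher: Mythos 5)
For the statement actually at issue --- the remark that $G$ is hyperbolic --- your first paragraph is correct and is exactly the paper's (implicit) justification: the hypotheses of Lemma \ref{finiteindexhyp} make $\varphi'$ a properly discontinuous, cocompact, isometric action of $H$ on the proper $\delta$-hyperbolic space $X$, so \v{S}varc--Milnor gives that $H$ is hyperbolic with $\partial_{\infty}H \cong \partial_{\infty}X$, and since $[G:H] < \aleph_{0}$ the inclusion $H \hookrightarrow G$ is a quasi-isometry, whence $G$ is hyperbolic. The paper offers no more than this one-line argument, and yours matches it.

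Everything after your first paragraph is not a proof of the remark but a proof of Lemma \ref{finiteindexhyp} itself, and there it diverges from the paper in a way worth noting. The paper never extends the individual homeomorphisms $\varphi(g,-)$ over the Gromov boundary by hand: the functor $\Pi$ already produces a compactification $W = X+_{f}\partial_{\infty}(G)$ carrying a $G$-action with the convergence property (via Proposition 8.3.1 of \cite{Ge2}), and then the identification $\partial_{\infty}(G) = \partial_{\infty}(H)$ plus uniqueness of the attractor-sum for the \emph{restricted} $H$-action (Definition \ref{attractorsumcomp}) pins $W$ down as the hyperbolic compactification of $X$, since the geometric isometric $H$-action gives the latter the same characterizing property. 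Your normal-core route is heavier and contains a genuinely shaky step: from the exact identity $\varphi(g,\varphi(n,x_{0})) = \varphi(c_{g}(n),\varphi(g,x_{0}))$ you conclude that the homeomorphism $\varphi(g,-)$ ``coarsely agrees with a quasi-isometry on a coarsely dense net, hence is itself a quasi-isometry,'' but a homeomorphism of a proper geodesic space agreeing with a quasi-isometry on a net need not be coarsely controlled off the net (in $\R^{2}$ one can fix $\Z^{2}$ pointwise while displacing interior points arbitrarily far), so the claimed extension of $\varphi(g,-)$ to $\bar{X}$ does not follow as stated. This gap is immaterial for the remark you were asked to prove, but it is the reason the paper's proof of the lemma works through $\Pi$ and Gerasimov's uniqueness rather than through boundary extensions of the individual maps $\varphi(g,-)$.
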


\begin{proof}The functor $\Pi$ sends the hyperbolic compactification $G+_{\partial_{\infty}}\partial_{\infty}(G)$ to a space $W = X+_{f}\partial_{\infty}(G)$, and the action of $G$ on both has the convergence property (by the construction of $\Pi$ and Proposition 8.3.1 of \cite{Ge2}). It implies that the action of $H$ on $W$ has the convergence property.  But $\partial_{\infty}(G) = \partial_{\infty}(H)$ (since $H$ has finite index over $G$), which implies that $W = X+_{f}\partial_{\infty}(G)$ is the only compactification of $X$ such that the action of $H$ on it extends $\varphi'$ and it has the convergence property and the restriction to the action of $H$ on the boundary is hyperbolic (see \textbf{Definitions \ref{attractorsumcomp}} and \textbf{\ref{hypgroup}}). However, the hyperbolic compactification of $X$ has also this property, since $H$ acts on $X$ properly discontinuously, cocompactly and by isometries. So $W$ is the hyperbolic compactification of $X$.
\end{proof}

\begin{defi}Let $M$ be a compact manifold without boundary. We say that $M$ is aspherical if $\pi_{n}(M,p)$ is trivial for every $n > 1$. We say that $M$ is topologically rigid if for every compact manifold without boundary $N$, every homotopic equivalence $M \rightarrow N$ is homotopic to a homeomorphism.
\end{defi}

\begin{conj}Let $G$ be a group. We say that the Borel Conjecture holds for $G$ if every compact aspherical manifold without boundary such that $\pi_{1}(M,p) = G$ is topologically rigid.
\end{conj}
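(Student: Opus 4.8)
The plan begins with an observation about the nature of the statement: it is a \emph{definition}, not a proposition. It fixes the meaning of the phrase ``the Borel Conjecture holds for $G$''---namely, that every compact aspherical manifold without boundary $M$ with $\pi_1(M,p)=G$ is topologically rigid in the sense of the preceding definition---and as such it carries no mathematical content to be verified and admits no proof. (The Borel Conjecture in full generality is a celebrated open problem, so one could not hope to prove it outright in any case.) What the paper is doing here is isolating, as a hypothesis to be imposed on a single group $Q$, exactly the input it will need in case (3) of Theorem \ref{sierpinskihyp}.

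Accordingly, rather than proving anything, I would verify that the definition is well-posed and matches its intended use. First I would confirm that the manifolds to which ``topologically rigid'' must be applied are the ones that actually arise: in case (3) the group $Q$ acts properly discontinuously, cocompactly and isometrically on a simply connected negatively curved $M_Q$, so the closed quotient $M_Q/Q$ is aspherical with $\pi_1 = Q$, and ``the Borel Conjecture holds for $Q$'' is precisely the assertion that $M_Q/Q$ is topologically rigid. For the blowup in Proposition \ref{blowupsierpisnki} to produce a ball $X_p$ at a dimension-$4$ parabolic, the hyperbolic boundary $\partial_\infty Q$ must be $S^{n-1}=S^3$, which forces $M_Q/Q$ to be $4$-dimensional; so the rigidity being assumed is rigidity of a closed negatively curved $4$-manifold.

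If one genuinely wanted to \emph{establish} the hypothesis for a concrete class of groups---the only sense in which there is work to be done---the standard route is surgery theory, deducing the Borel Conjecture for $G$ from the Farrell--Jones Conjecture, which is known for hyperbolic groups and for CAT(0) groups. Since the $Q$ of case (3) is hyperbolic, in every dimension \emph{other than} $4$ the conjecture would follow this way. The main obstacle is exactly the one flagged in the introduction: in dimension $4$ topological surgery is not available in the required generality, so the Borel Conjecture is genuinely open for the relevant $4$-manifolds and cannot be taken for granted. This is why the statement appears as a conditional hypothesis in case (3) rather than as a theorem, and why I would not attempt to discharge it.
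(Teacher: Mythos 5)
You correctly identified that this statement, despite being wrapped in a conjecture environment, is a definition fixing the phrase ``the Borel Conjecture holds for $G$,'' and the paper accordingly supplies no proof (only a remark pointing to the literature); there is nothing to discharge, and it is used exactly as you describe: as a hypothesis on each $Q \in \mathcal{P}-\mathcal{Q}$ in case (3) of Theorem \ref{sierpinskihyp}, where the rigidity of $M_Q/Q$ yields the homeomorphism with $(\partial_{B}(G,\mathcal{P})-\{q\})/Q$. Your assessment matches the paper's treatment, including the observation that the dimension-$4$ case is precisely where the conjecture cannot be invoked from known results and must be assumed.
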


\begin{obs}See more about the conjecture in \cite{BL}.
\end{obs}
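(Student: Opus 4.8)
The final environment, though placed in a conjecture environment, is in fact a definition rather than an assertion to be proved: it fixes terminology by declaring what it means to say that \emph{the Borel Conjecture holds for a group $G$}, namely that every compact aspherical manifold without boundary having fundamental group $G$ is topologically rigid. There is consequently no mathematical content to establish. The plan, therefore, is simply to confirm that the declaration is well-posed, which reduces to checking that each notion it invokes has already been fixed: ``compact aspherical manifold without boundary'' and ``topologically rigid'' are supplied in the immediately preceding \textbf{Definition}, while the condition $\pi_{1}(M,p)=G$ is standard. Once these are in place the declaration is self-contained and requires no argument.

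The accompanying remark is purely bibliographic, pointing the reader to \cite{BL} for background on the conjecture; a pointer to the literature carries no provable content either, so the only ``step'' is to verify that \cite{BL} is an appropriate source for the Borel Conjecture, which it is.

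I would, however, flag forward to the place where this definition does genuine work, namely case (3) of \textbf{Theorem \ref{sierpinskihyp}}. There the Borel Conjecture for each peripheral group $Q$ is assumed precisely so that, combined with the hypothesis that $Q$ acts cocompactly by isometries on a negatively curved simply connected manifold $M_{Q}$, topological rigidity can be invoked to upgrade a homotopy equivalence to a homeomorphism, and thereby to exhibit the relevant glueing as a closed $4$-ball, which is exactly what \textbf{Proposition \ref{blowupsierpisnki}} then requires.

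The only obstacle here is interpretive rather than mathematical: one must resist the temptation to try to \emph{prove} the Borel Conjecture --- an open problem in general --- and instead recognize that the environment records a hypothesis to be used later, not a theorem to be demonstrated. No construction or calculation is called for.
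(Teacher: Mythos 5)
Your reading is correct: the remark is purely bibliographic and the preceding environment, despite the \texttt{conj} wrapper, functions as a definition of the phrase ``the Borel Conjecture holds for $G$,'' so the paper itself offers (and needs) no proof. Your identification of where the definition is actually used --- case (3) of Theorem \ref{sierpinskihyp}, where topological rigidity upgrades the $K(Q,1)$ identification to a homeomorphism with $M_{Q}/Q$ --- matches the paper exactly.
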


\begin{prop}(Ferry, \cite{Fe}) If $X$ is an AR space and $Z\subseteq X$ is a $\mathcal{Z}$-set such that $Z$ is homeomorphic to $S^{n-1}$ and $X-Z$ is homeomorphic to $\R^{n}$, with $n \geqslant 5$, then $X$ is homeomorphic to the closed  $n$-ball.
\end{prop}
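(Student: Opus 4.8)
The plan is to recognize $X$ as a compact contractible topological $n$-manifold with boundary $Z \cong S^{n-1}$, and then to invoke the high-dimensional recognition of the ball to conclude $X \cong D^n$. First I would record the soft facts. Since $X$ is an AR it is contractible and locally contractible; and, as it arises by adjoining the closed nowhere-dense set $Z$ to the dense open set $X - Z \cong \R^n$, it is a compact $\mathcal{Z}$-compactification of $\R^n$ with remainder the sphere $Z$. In particular the interior $X - Z$ is already a topological $n$-manifold, so all of the work is concentrated along $Z$. The role of the $\mathcal{Z}$-set hypothesis here is precisely that $Z$ can be pushed off itself by maps arbitrarily close to the identity — exactly the behaviour of the boundary of a manifold-with-boundary, which is always a $\mathcal{Z}$-set in the total space.

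The heart of the argument is to upgrade this to an honest manifold-with-boundary structure near $Z$. I would first check that $X$ is a \emph{generalized} $n$-manifold with boundary $Z$: for $x \in X - Z$ the local homology $H_*(X, X - \{x\})$ is that of $\R^n$ because the interior is a manifold, while for $x \in Z$ the $\mathcal{Z}$-set condition together with the local contractibility of the AR should force the local homology to be that of a boundary point of a half-space. One also needs the embedding of $Z$ to be tame — i.e.\ $1$-LCC — which I again expect to extract from the $\mathcal{Z}$-set hypothesis together with the fact that the end $S^{n-1}\times[0,\infty)$ of $\R^n$ is collared. Granting this, and since $n \geqslant 5$, I would verify the disjoint disk property and apply the Edwards--Quinn manifold recognition machinery (Quinn's resolution theorem to kill the resolution obstruction, followed by Edwards' cell-like approximation theorem, in their versions for generalized manifolds with boundary) to conclude that $X$ is a genuine compact topological $n$-manifold with $\partial X = Z$.

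I expect this upgrade to be the main obstacle. Verifying the local-homology and $1$-LCC tameness conditions uniformly along the whole sphere $Z$ — rather than at a single well-behaved point — is the delicate step, and it is exactly here that the three hypotheses (AR, $\mathcal{Z}$-set, and the \emph{specific} complement $\R^n$) must be used in a coordinated way; the restriction $n \geqslant 5$ is unavoidable, since the recognition theorems fail, or are unknown, in lower dimensions.

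Finally, once $X$ is known to be a compact contractible $n$-manifold with boundary $S^{n-1}$, I would finish by a standard high-dimensional argument. Deleting the interior of a small ball $B \subseteq X - Z$ exhibits $X \setminus \mathrm{int}\,B$ as a simply connected $h$-cobordism between the two resulting copies of $S^{n-1}$; both boundary inclusions are homotopy equivalences because $X$ is contractible, so that $X$ with an interior point removed is homotopy equivalent to $S^{n-1}$. By the topological $h$-cobordism theorem (for $n \geqslant 6$ via Kirby--Siebenmann, and for $n = 5$ via Freedman's simply connected case) this cobordism is a product $S^{n-1} \times [0,1]$, whence $X \cong B \cup (S^{n-1}\times[0,1]) \cong D^n$. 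Alternatively one may double $X$, identify the double with $S^n$ using the topological Poincar\'e conjecture in dimensions $\geqslant 5$, and split off a ball.
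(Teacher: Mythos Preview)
The paper does not supply a proof of this proposition: it is quoted as a result of Ferry and pointed to on page~479 of Bestvina--Mess \cite{BM}, so there is no argument in the paper itself to compare yours against.

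For what it is worth, your outline can be completed, and the step you flag as the main obstacle is lighter than you fear. For $z\in Z$ the $\mathcal{Z}$-set hypothesis (in the form ``$U\setminus Z\hookrightarrow U$ is a homotopy equivalence for every open $U\subseteq X$'') applied with $U=X\setminus\{z\}$ gives $X\setminus\{z\}\simeq X\setminus Z\cong\R^{n}$, hence $H_{*}(X,X\setminus\{z\})=0$; so $X$ is an ENR homology $n$-manifold with boundary $Z$. Applying the same hypothesis to a pair $V\subseteq U$ of small neighbourhoods with $V\hookrightarrow U$ nullhomotopic (local contractibility of the ANR) shows $V\setminus Z\hookrightarrow U\setminus Z$ is nullhomotopic, whence $Z$ is $1$-LCC in $X$. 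In dimension $\geqslant 5$ this yields a collar on $Z$, so $X$ is a genuine compact contractible manifold with boundary $S^{n-1}$ and your $h$-cobordism finish applies. Do note, however, that Ferry's 1979 paper predates Quinn's resolution and collaring theorems, so the original argument cannot have run through Edwards--Quinn; Ferry's methods are those of controlled topology (homotoping $\varepsilon$-equivalences to homeomorphisms, the $\alpha$-approximation theorem), and that is the route Bestvina--Mess are invoking. Your approach is a legitimate modern alternative, trading Ferry's direct controlled arguments for heavier but more packaged recognition machinery.
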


\begin{obs}It is stated in page 479 of \cite{BM}.
\end{obs}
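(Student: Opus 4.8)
The plan is to build the closed-ball structure by doubling $X$ across $Z$ and applying the high-dimensional manifold recognition theorem. First I would record the easy consequences of the hypotheses: since $Z \cong S^{n-1}$ is compact and $X - Z \cong \R^n$, the space $X$ is compact and metrizable, and being an AR it is contractible and finite dimensional of dimension $n$. The interior $X - Z$ is already an honest $n$-manifold, so all the difficulty is concentrated along the $\mathcal{Z}$-set $Z$.

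Form the double $DX = X \cup_Z X$, glued along $Z$. Away from $Z$ this is a disjoint union of two copies of $\R^n$, hence a manifold; the content is to understand $DX$ at points of $Z$. Using that $Z$ is a $\mathcal{Z}$-set in the AR $X$, one shows that small neighbourhoods of a point $z \in Z$ in $X$ have the local homology of a half-space at a boundary point (trivial reduced local homology), so that a Mayer--Vietoris computation across the two halves gives $H_{\ast}(DX, DX - \{z\}) \cong H_{\ast}(\R^{n}, \R^{n} - \{0\})$. Thus $DX$ is a generalized $n$-manifold. I would then check the two remaining hypotheses of the Edwards--Quinn recognition theorem: the disjoint disks property, obtained by first pushing $2$-disks off the $\mathcal{Z}$-set $Z$ into the manifold part $X - Z \cong \R^{n}$ (using the defining property of a $\mathcal{Z}$-set) and then separating them there, since $\R^{n}$ has the disjoint disks property for $n \geqslant 5$; and the vanishing of Quinn's resolution obstruction, which is automatic because $DX$ contains the nonempty open honest manifold $\R^{n}$ and Quinn's local index is constant on a connected generalized manifold. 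Hence $DX$ is a topological $n$-manifold.

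A short homotopy computation identifies $DX$: Mayer--Vietoris with $X$ contractible gives $\tilde{H}_{k}(DX) \cong \tilde{H}_{k-1}(S^{n-1})$, so $DX$ has the homology of $S^{n}$, and van Kampen (both halves and $Z$ simply connected for $n \geqslant 5$) shows $DX$ is simply connected; thus $DX$ is a homotopy $n$-sphere and, by the topological Poincar\'e conjecture, $DX \cong S^{n}$. It remains to recover $X$ as one side of the separating sphere $Z$. Again using the $\mathcal{Z}$-set hypothesis I would show $Z$ is bicollared in $DX$, so that Brown's generalized Schoenflies theorem exhibits the closure of each complementary region as a closed $n$-ball; one of these is $X$, giving $X \cong D^{n}$.

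I expect the main obstacle to be the local analysis along $Z$: verifying the correct local homology at the boundary points and establishing both the disjoint disks property and the bicollaring, all of which rest on extracting genuine geometric control from the purely homotopy-theoretic $\mathcal{Z}$-set hypothesis. This is also precisely where the restriction $n \geqslant 5$ is unavoidable, since the Edwards--Quinn recognition theorem and the high-dimensional Schoenflies and Poincar\'e theorems all require it.
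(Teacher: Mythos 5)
The statement you were asked to prove is not actually proved in the paper: the proposition is quoted from Ferry \cite{Fe}, and the remark under review merely records that the statement appears on page 479 of Bestvina--Mess \cite{BM}. Ferry's own argument runs through controlled topology (as the title of \cite{Fe} indicates, homotoping $\varepsilon$-maps to homeomorphisms), so your doubling strategy is in any case a genuinely different route and must stand on its own. It does not: it has a gap at its central step.

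The gap is the disjoint disks property for the double $DX = X \cup_{Z} X$. You propose to push a pair of $2$-disks off $Z$ ``using the defining property of a $\mathcal{Z}$-set'' and then separate them inside the manifold part. But $Z$ is a $\mathcal{Z}$-set only in each half $X$; it is \emph{not} a $\mathcal{Z}$-set in $DX$. The complement $DX - Z$ is a disjoint union of two copies of $\R^{n}$, so a disk whose image meets both copies at points far from $Z$ still meets both copies after any sufficiently small homotopy and hence, being connected, still meets $Z$: no small push off $Z$ exists. (This already happens in the model case $DX = S^{n}$ with $Z$ the equator.) Thus the only hard case of the disjoint disks property --- disks crossing $Z$, where the local topology of $DX$ is exactly what is unknown --- is untouched by your argument; there is no manifold structure near $Z$ with which to make crossing loci transverse or steer them around each other. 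The soft parts of your outline are correct: for $z \in Z$ one has $X - \{z\} \simeq X - Z \cong \R^{n}$ (because $Z - \{z\}$ is a $\mathcal{Z}$-set in $X - \{z\}$), so Mayer--Vietoris does give $DX$ the local and global homology of $S^{n}$, and Quinn's index is $1$ since manifold points exist; but none of this approaches the crux. A second gap of the same nature occurs at the end: even granting $DX \cong S^{n}$, the closure of a complementary domain of an embedded $(n-1)$-sphere need not be a ball --- the Alexander horned sphere is the classical warning --- so everything hinges on $Z$ being bicollared, and your one-line appeal to the $\mathcal{Z}$-set hypothesis does not deliver this. One would have to extract a two-sided local co-connectivity (1-LCC) condition along $Z$ and invoke a codimension-one flattening theorem (in the spirit of Daverman's ``locally nice codimension one manifolds are locally flat'') before the generalized Schoenflies theorem \cite{Br} can be applied. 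In short, the two places where the real difficulty of Ferry's theorem is concentrated are precisely the places where your proposal substitutes an assertion for an argument.
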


\begin{prop}Let $G$ be a group $\varphi: G \curvearrowright S^{n}$ an action by homeomorphisms, with $n \geqslant 5$, $P$ the set of bounded parabolic points, $P' \subseteq P$ a set of representatives, $Q' \subseteq P'$, $Q = Orb_{\varphi} Q' \subseteq P$,  and $\eta = \{\eta_{p}\}_{p \in P'}$ a family of actions with $L_{p}+\eta_{p}: Stab_{\varphi}p \curvearrowright Stab_{\varphi}p+_{\delta_{p}}C_{p}$ such that $Stab_{\varphi}p+_{\delta_{p}}C_{p}$ has the perspectivity property (where $L_{p}$ is the left multiplication action), $C_{p}$ is a single point if $p \notin Q'$ and $C_{p}$ is homeomorphic to $S^{n-1}$ otherwise. Suppose that $Q$ is countable and dense on $S^{n}$. If, for every $p \in Q'$, there exists a space $E_{p}+_{f_{p}}C_{p}$ that forms an $E\mathcal{Z}$-structure for $Stab_{\varphi}p$ that it is in correspondence with $Stab_{\varphi}p+_{\delta_{p}}C_{p}$ by the functor induced by the action of $Stab_{\varphi}p$, then the blowup space $X$ with respect to $\eta$ is homeomorphic to a $n-1$-dimensional Sierpi\'nski carpet.
\end{prop}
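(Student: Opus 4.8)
The plan is to reduce the statement to an application of Proposition \ref{blowupsierpisnki}: it suffices to show that the slice $X_p = (S^n - \{p\}) +_{(\delta_p)_\Pi} C_p$ is homeomorphic to the closed $n$-ball $D^n$ with boundary the preimage $C_p$ of $p$ when $p \in Q'$, and homeomorphic to $S^n$ (via the projection $\pi_p$) when $p \in P' - Q'$. The second case is immediate: for $p \notin Q'$ the fiber $C_p$ is a single point, so $X_p$ is the one-point compactification of $S^n - \{p\} \cong \R^n$, which is $S^n$, and $\pi_p$ is the resulting homeomorphism. All the content lies in the case $p \in Q'$.

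Fix $p \in Q'$ and write $H = Stab_{\varphi} p$. Because $p$ is a bounded parabolic point, $H$ acts properly discontinuously and cocompactly on $S^n - \{p\}$; denote this action $\varphi_1$. The $E\mathcal{Z}$-structure supplies a second properly discontinuous cocompact action $\varphi_2$ of $H$ on $E_p$. Both $S^n - \{p\} \cong \R^n$ and $E_p$ are Hausdorff, locally compact, metric ANR spaces, so Proposition \ref{ezcorrespondence} applies to the pair $(\varphi_1,\varphi_2)$ and tells us that the composite functor $\Pi_1 \circ \Lambda_2$ preserves $E\mathcal{Z}$-structures. I would then identify $X_p$ as the image of the given $E\mathcal{Z}$-structure under this composite. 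By the asserted correspondence, $\Lambda_2$ carries $E_p +_{f_p} C_p$ to $H +_{\delta_p} C_p$, and by the construction of $\Pi_1$ one has $\Pi_1(H +_{\delta_p} C_p) = (S^n - \{p\}) +_{(\delta_p)_\Pi} C_p = X_p$. Hence $X_p = \Pi_1 \circ \Lambda_2(E_p +_{f_p} C_p)$ is itself an $E\mathcal{Z}$-structure for $H$.

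In particular $X_p$ is a compact AR, $C_p \cong S^{n-1}$ is a $\mathcal{Z}$-set in it, and $X_p - C_p = S^n - \{p\} \cong \R^n$. Since $n \geqslant 5$, Ferry's theorem stated above now yields $X_p \cong D^n$ with $C_p$ as its boundary sphere. With both cases of the slices settled and $Q$ countable and dense by hypothesis, Proposition \ref{blowupsierpisnki} gives at once that $X$ is the $(n-1)$-dimensional Sierpi\'nski carpet.

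The main obstacle is the middle step: one must check carefully that the functorial correspondence of Proposition \ref{ezcorrespondence} really transports the full $E\mathcal{Z}$-structure data (AR total space, $\mathcal{Z}$-set boundary, the nullity condition, and the extended equivariant action) from the model $E_p +_{f_p} C_p$ over to the blowup compactification $X_p$ of $S^n - \{p\}$, and that the identification $X_p = \Pi_1 \circ \Lambda_2(E_p +_{f_p} C_p)$ is exactly the compactification produced by the blowup construction. Once $X_p$ is recognized as an $E\mathcal{Z}$-structure, recognizing it as a ball is a purely topological matter of invoking Ferry's theorem, where the dimension hypothesis $n \geqslant 5$ is indispensable.
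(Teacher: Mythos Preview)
Your proposal is correct and follows essentially the same route as the paper: use Proposition~\ref{ezcorrespondence} to transport the $E\mathcal{Z}$-structure from $E_{p}+_{f_{p}}C_{p}$ to $X_{p}$, then invoke Ferry's theorem to recognize $X_{p}$ as a closed $n$-ball, and finish with Proposition~\ref{blowupsierpisnki}. You are, if anything, slightly more careful than the paper --- you treat the trivial case $p\in P'-Q'$ explicitly (the paper omits it), and you spell out the identification $X_{p}=\Pi_{1}\circ\Lambda_{2}(E_{p}+_{f_{p}}C_{p})$ that the paper records in one line.
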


\begin{proof}Let $p \in P' - Q'$.  Let's name the actions $\varphi_{p} = \varphi|_{ Stab_{\varphi} p \times (S^{n} - \{p\})}$ and $\psi_{p}+\eta_{p}: Stab_{\varphi} p \curvearrowright E_{p}+_{f_{p}}C_{p}$ (the action given by the $E\mathcal{Z}$-structure). If $\Pi_{p}: EPers(Stab_{\varphi}p) \rightarrow EPers(\varphi_{p})$ and $\Lambda'_{p}: EPers(\psi_{p}) \rightarrow EPers(Stab_{\varphi}p)$ are the functors induced by $\varphi_{p}$ and $\psi_{p}$, respectively, then $\Pi_{p}\circ\Lambda'_{p}$ sends $E\mathcal{Z}$-structures to $E\mathcal{Z}$-structures (\textbf{Proposition \ref{ezcorrespondence}}). So the space $X_{p} = \Pi_{p}(Stab_{\varphi}p+_{\delta_{p}}C_{p}) = (S^{n} - \{p\}))+_{f'_{p}}C_{p}$, where $f'_{p}$ is  some admissible map, form an $E\mathcal{Z}$-structure of $Stab_{\varphi}p$ with $\mathcal{Z}$-boundary the subspace $C_{p}$. But $S^{n} - \{p\}$ is homeomorphic to $\R^{n}$ and $C_{p}$ is homeomorphic to $S^{n-1}$, which implies, by Ferry's Theorem, that $X_{p}$ is homeomorphic to the closed $n$-ball. Thus, it follows from \textbf{Proposition \ref{blowupsierpisnki}} that $X$ is a $n-1$-dimensional Sierpi\'nski carpet.
\end{proof}

Then we have:

\begin{teo}\label{sierpinskihyp}Let $(G,\mathcal{P})$ be a relatively hyperbolic pair with Bowditch boundary homeomorphic to $S^{n}$ and $\mathcal{Q}$ a proper subset of $\mathcal{P}$ such that every group in $\mathcal{P} - \mathcal{Q}$ is hyperbolic. Suppose that one of the following holds:

\begin{enumerate}
    \item $n = 1$ or $2$.
    \item $n = 3$ and every group in $\mathcal{P}-\mathcal{Q}$ is virtually torsion-free.
    \item $n = 4$, the topological characterization of the $3$-dimensional Sierpi\'nski Carpet as an inverse limit of $4$-balls holds, every group $Q \in \mathcal{P} - \mathcal{Q}$ is torsion-free, it acts properly discontinuously, cocompactly and by isometries on a simply connected manifold $M_{Q}$ with negatively bounded curvature and the Borel Conjecture holds for $Q$.
    \item $n \geqslant 5$ and every group in $\mathcal{P} - \mathcal{Q}$ has hyperbolic boundary homeomorphic to $S^{n-1}$.
\end{enumerate}

Then $\partial_{B}(G,\mathcal{Q})$ is a $n-1$-dimensional Sierpi\'nski carpet.
\end{teo}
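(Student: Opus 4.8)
The plan is to realise $\partial_B(G,\mathcal Q)$ as a parabolic blowup of $S^n=\partial_B(G,\mathcal P)$ in which exactly the bounded parabolic points with stabiliser in $\mathcal P-\mathcal Q$ are expanded. Fix $\varphi\colon G\curvearrowright S^n$, let $P$ be its set of bounded parabolic points and $P'$ a set of orbit representatives, and write $P'=Q'\sqcup(P'-Q')$, where $p\in Q'$ means $H_p:=\mathrm{Stab}_\varphi p$ is conjugate into $\mathcal P-\mathcal Q$, hence hyperbolic. For $p\in Q'$ take $C_p=\partial_\infty H_p$ with the hyperbolic (attractor-sum) compactification $H_p+_{\delta_p}C_p$, which has the perspectivity property; for $p\in P'-Q'$ let $C_p$ be a single point. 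This data defines a blowup map $\pi\colon X\to S^n$ whose fibre over each point of $Q:=\mathrm{Orb}_\varphi Q'$ is $\partial_\infty H_p\cong S^{n-1}$ and whose fibre over every other parabolic point is a single point. Since $G$ is finitely generated and $\varphi$ is minimal, $Q$ is countable and dense, and $Q'\neq\varnothing$ because $\mathcal Q\subsetneq\mathcal P$.

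Over a point of $Q$ the stabiliser acts on $\partial_\infty H_p$ as a minimal hyperbolic convergence action, while over every other parabolic point it acts on a single point---that is, relatively hyperbolically with the whole group as its sole peripheral subgroup (the two-ended fibres arising when $n=1$ being covered by the usual virtually-cyclic convention). By Proposition \ref{blowupconv}(3), $\psi\colon G\curvearrowright X$ is then relatively hyperbolic with peripheral structure $\bigcup_p\mathcal P_p=\mathcal Q$, so $X\cong\partial_B(G,\mathcal Q)$. By Proposition \ref{blowupsierpisnki} (with its $n=4$ caveat) it remains to prove the purely local statement that for each $p\in Q'$ the piece $X_p=(S^n-\{p\})+_{(\delta_p)_\Pi}C_p$ is homeomorphic to $D^n$ with $\partial D^n=\pi^{-1}(p)$. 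Here $H_p$ acts properly discontinuously and cocompactly on $S^n-\{p\}\cong\mathbb R^n$, and $X_p$ is a perspective $\mathcal Z$-compactification of $\mathbb R^n$ by $C_p$.

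Establishing $X_p\cong D^n$ is the crux, and it splits by dimension. For $n\ge 5$ this is the Proposition immediately preceding the theorem: the Bestvina--Mess $E\mathcal Z$-structure of $H_p$ corresponds to $X_p$ via Proposition \ref{ezcorrespondence}, exhibiting $X_p$ as an AR that contains $C_p\cong S^{n-1}$ as a $\mathcal Z$-set with complement $\mathbb R^n$, whence Ferry's Theorem forces $X_p\cong D^n$; only $\partial_\infty H_p\cong S^{n-1}$ is used, which is hypothesis (4). When $n=1$, $H_p$ is two-ended, $C_p\cong S^0$, and $X_p$ is the two-point compactification $[-\infty,+\infty]\cong D^1$. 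When $n=2$, the action on $\mathbb R^2$ exhibits $H_p$ as a group acting geometrically on the plane, hence virtually a closed surface group, so $\partial_\infty H_p\cong S^1$ and $H_p$ acts geometrically on $\mathbb H^2$; topological rigidity of surfaces together with Lemma \ref{finiteindexhyp} (applied to a torsion-free finite-index subgroup acting by isometries) identifies $X_p$ with $\mathbb H^2\cup S^1\cong D^2$. When $n=3$, virtual torsion-freeness lets me pass to a torsion-free finite-index $H'_p$; Geometrization makes $\mathbb R^3/H'_p$ a closed hyperbolic $3$-manifold with $\partial_\infty H_p\cong S^2$, and topological rigidity of $3$-manifolds with Lemma \ref{finiteindexhyp} gives $X_p\cong\mathbb H^3\cup S^2\cong D^3$.

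Dimension $4$ is where the difficulty concentrates, as its hypotheses reveal. Each $Q\in\mathcal P-\mathcal Q$ is torsion-free and acts geometrically by isometries on a simply connected negatively curved $M_Q$; by Cartan--Hadamard $M_Q\cong\mathbb R^4$ with visual boundary $S^3$, so $M_Q\cup\partial M_Q\cong D^4$. The two geometric actions of $Q$ on $\mathbb R^4$---the given one on $S^4-\{p\}$ and the isometric one on $M_Q$---must be reconciled, and this is exactly what the Borel Conjecture for $Q$ provides: an equivariant homeomorphism transporting the negatively curved structure onto $S^4-\{p\}$, after which Lemma \ref{finiteindexhyp} yields $X_p\cong D^4$, and the assumed topological characterization of the $3$-dimensional Sierpi\'nski carpet lets Proposition \ref{blowupsierpisnki} run in this otherwise excluded dimension. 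The essential obstruction throughout is the upgrade from ``abstract $\mathcal Z$-compactification of $\mathbb R^n$ by $S^{n-1}$'' to ``genuine $D^n$'': Ferry's Theorem automates it for $n\ge 5$, whereas for $n\le 4$ it must be imported from geometry---surfaces, Geometrization, or a negatively curved model together with Borel---and the partial unavailability of these tools in dimension $4$ is precisely why the conjectural hypotheses are imposed there.
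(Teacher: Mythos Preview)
Your proposal is correct and follows the paper's overall architecture exactly: realise $\partial_B(G,\mathcal Q)$ as the parabolic blowup of $S^n$, identify it via \textbf{Proposition \ref{blowupconv}}, reduce via \textbf{Proposition \ref{blowupsierpisnki}} to the local statement $X_p\cong D^n$, and treat each dimension separately, with $n\ge 5$ handled by the preceding Proposition (Bestvina--Mess $E\mathcal Z$-structure plus Ferry).

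The one genuine divergence is the $n=3$ case. You appeal to Geometrization to recognise $\mathbb R^3/H'_p$ as a closed hyperbolic $3$-manifold, then use topological rigidity of $3$-manifolds to transport the hyperbolic metric back and invoke \textbf{Lemma \ref{finiteindexhyp}}. The paper instead stays more elementary: it equips $S^3-\{q\}$ with a proper geodesic metric for which $Q'$ acts by isometries (Guilbault--Moran, Proposition 6.3 of \cite{GM}), notes the quotient is irreducible because $\mathbb R^3$ is (via Hatcher and the Generalized Schoenflies Theorem), and then applies Bestvina--Mess's Theorem 4.1 directly to conclude that the hyperbolic compactification is $D^3$. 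Both arguments are valid; yours is conceptually parallel to the $n=2$ and $n=4$ cases (find a model geometry, use rigidity to import it), while the paper's avoids the heavy hammer of Geometrization at the cost of citing a somewhat specialised $3$-manifold result. A minor further difference in the $n=2$ case: the paper invokes Dahmani's Theorem 0.3 and a result of Ha\"issinsky--Lecuire to carefully arrange a faithful finite-index action before passing to a surface quotient, whereas you pass directly to ``virtually a closed surface group''; your shortcut is fine since hyperbolicity of $H_p$ already gives virtual torsion-freeness.
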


\begin{obs}$G$ is relatively hyperbolic with respect to $\mathcal{Q}$ by a theorem of Drutu and Sapir  (Corollary 1.14 of \cite{DS}).

If $n = 1$, then every group $P\in\mathcal{P}$, stabilizer of a bounded parabolic point $p$, acts properly discontinuously and cocompactly on $\partial_{B}(G,\mathcal{P}) - \{p\}$, which is homeomorphic to $\R$. Then $P$ is two-ended, which implies that it is hyperbolic.

If the Dahmani boundary is equivalent with the parabolic blowup with a Bowditch boundary as a base, then this result for $n = 2$ is a special case of Theorem 1.1 of \cite{TW}.
\end{obs}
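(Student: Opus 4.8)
The remark bundles three essentially independent assertions, and I would justify each in turn.

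For the first, that $(G,\mathcal{Q})$ is again relatively hyperbolic, the plan is to apply the version of Drutu--Sapir's Corollary 1.14 of \cite{DS} recalled in the Remark after \textbf{Proposition \ref{blowupconv}}: if $(G,\mathcal{P})$ is relatively hyperbolic and each $H \in \mathcal{P}$ is relatively hyperbolic with respect to some family $\mathcal{P}_{H}$, then $(G,\bigcup_{H}\mathcal{P}_{H})$ is relatively hyperbolic. I would take $\mathcal{P}_{H} = \{H\}$ for $H \in \mathcal{Q}$ (the improper peripheral structure, with respect to which any group is trivially relatively hyperbolic) and $\mathcal{P}_{H} = \emptyset$ for $H \in \mathcal{P}-\mathcal{Q}$. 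The latter choice is legitimate because a hyperbolic group is precisely a group admitting a minimal convergence action all of whose points are conical (\textbf{Definition \ref{hypgroup}}), hence one with no bounded parabolic points, so its peripheral structure is empty. Then $\bigcup_{H}\mathcal{P}_{H} = \mathcal{Q}$, and the corollary yields the claim. This is exactly the mechanism by which hyperbolic peripheral subgroups get absorbed into the hyperbolic part of the structure.

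For the $n=1$ assertion, let $p \in \partial_{B}(G,\mathcal{P}) \cong S^{1}$ be a bounded parabolic point and $P = \mathrm{Stab}(p)$, so that by definition $P$ acts properly discontinuously and cocompactly on $S^{1}-\{p\} \cong \R$. I would first pass to the orientation-preserving subgroup $P^{+} \leqslant P$, which has index at most $2$. Every point stabilizer is finite by proper discontinuity, and the only finite-order orientation-preserving self-homeomorphism of $\R$ is the identity (a strictly increasing $f$ with $f(x_{0})\neq x_{0}$ satisfies $f^{k}(x_{0})\neq x_{0}$ for all $k$), so $P^{+}$ acts freely. Being also properly discontinuous and cocompact on the locally compact Hausdorff space $\R$, the quotient $\R/P^{+}$ is a compact connected $1$-manifold without boundary (using \textbf{Remark \ref{coveringhausdorff}} for the covering property and Hausdorffness), hence a circle, so $P^{+}\cong\Z$ since $\R$ is its universal cover. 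Therefore $P$ is virtually $\Z$, i.e.\ two-ended, and in particular word-hyperbolic. In the case $n=1$ this shows that \emph{every} peripheral subgroup is automatically hyperbolic, so the standing hypothesis that every group in $\mathcal{P}-\mathcal{Q}$ is hyperbolic imposes no extra restriction.

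For the $n=2$ assertion the work is purely an identification of spaces. By \textbf{Proposition \ref{blowupconv}} the space obtained from $\partial_{B}(G,\mathcal{P}) \cong S^{2}$ by blowing up each parabolic point in the $\mathcal{P}-\mathcal{Q}$ orbits to the hyperbolic boundary of its (hyperbolic) stabilizer, while keeping the $\mathcal{Q}$-points as cone points, is equivariantly $\partial_{B}(G,\mathcal{Q})$. Under the hypothesis of the remark that the Dahmani boundary $\partial_{D}(G,\mathcal{P})$ is equivariantly homeomorphic to such a parabolic blowup of the Bowditch boundary, Theorem 1.1 of \cite{TW} identifies that blowup with the $1$-dimensional Sierpi\'nski carpet, whence so is $\partial_{B}(G,\mathcal{Q})$, which is the conclusion of \textbf{Theorem \ref{sierpinskihyp}} for $n=2$. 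The genuinely delicate point, and the reason the statement is phrased conditionally, is exactly this comparison: verifying that Dahmani's construction produces the same space as the parabolic blowup over the Bowditch boundary is a nontrivial matching of two a priori different compactifications, so I would treat it as a hypothesis rather than attempt to establish it here.
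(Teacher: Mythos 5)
Your first two arguments are correct and supply exactly the details that the paper leaves to its parenthetical citations: the Drutu--Sapir reduction with $\mathcal{P}_{H}=\{H\}$ for $H\in\mathcal{Q}$ and $\mathcal{P}_{H}=\emptyset$ for the hyperbolic groups in $\mathcal{P}-\mathcal{Q}$ is the intended use of Corollary 1.14 of \cite{DS} as restated after \textbf{Proposition \ref{blowupconv}}, and the orientation argument is the right elementary route to two-endedness for $n=1$. One slip in the latter: the step ``so $P^{+}$ acts freely'' tacitly assumes the action of $P$ on $\partial_{B}(G,\mathcal{P})-\{p\}$ is faithful, which need not hold --- the action can have a kernel $N$, finite by proper discontinuity, and your fixed-point argument only shows that an element of $P^{+}$ fixing a point acts as the identity homeomorphism, i.e.\ lies in $N$. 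The patch is immediate: $P^{+}/N$ acts freely, properly discontinuously and cocompactly on $\R$, hence $\R/(P^{+}/N)$ is a circle and $P^{+}/N\cong\Z$; since $\Z$ is free the extension $1\to N\to P^{+}\to\Z\to 1$ splits, so $P^{+}$, and hence $P$, is virtually $\Z$, i.e.\ two-ended.

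The $n=2$ paragraph contains a genuine imprecision. The Dahmani boundary of $(G,\mathcal{P})$ attaches the $\mathcal{Z}$-boundary of \emph{every} peripheral subgroup over its parabolic point; for a pair with $S^{2}$ Bowditch boundary all peripherals are virtual surface groups (Theorem 0.3 of \cite{Da3}), so that fiber is a circle both for the hyperbolic stabilizers and for the virtually $\Z^{2}$ ones --- never a point. By contrast, $\partial_{B}(G,\mathcal{Q})$ is the \emph{partial} blowup in which the $\mathcal{Q}$-points remain points. Hence the identification you hypothesize, $\partial_{D}(G,\mathcal{P})\cong$ the blowup ``keeping the $\mathcal{Q}$-points as cone points,'' can only hold when $\mathcal{Q}=\emptyset$, in which case the deduction from Theorem 1.1 of \cite{TW} goes through as you say (and this is the honest reading of ``special case'' in the remark, since $\partial_{B}(G,\emptyset)=\partial_{\infty}(G)$ is then the full blowup). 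For nonempty $\mathcal{Q}$, knowing that the full blowup is a Sierpi\'nski carpet does not by itself identify the partial blowup: collapsing the circles over the $\mathcal{Q}$-orbits back to points leaves the scope of Tshishiku--Walsh's statement, and one still needs the closed-disk structure of each $X_{q}$ over the $\mathcal{P}-\mathcal{Q}$ points --- which is precisely what the paper's actual proof of \textbf{Theorem \ref{sierpinskihyp}} establishes via \textbf{Lemma \ref{finiteindexhyp}} and \textbf{Proposition \ref{blowupsierpisnki}}, rather than by quoting \cite{TW}.
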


\begin{proof}Let $X$ be the blowup space of $\partial_{B}(G,\mathcal{P})$ with fibers the hyperbolic boundary of the groups in $\mathcal{P} - \mathcal{Q}$ and a single point for each group in $\mathcal{Q}$. We have that $X = \partial_{B}(G,\mathcal{Q})$, by \textbf{Proposition \ref{blowupconv}}.

Suppose $n = 1$

Let $Q \in \mathcal{P} - \mathcal{Q}$ be the stabilizer of a bounded parabolic point $q$. Since $X_{q}$ is a compactification of  $\partial_{B}(G,\mathcal{P}) - \{q\}$ with $\partial_{\infty}(Q)$ as boundary, $\partial_{B}(G,\mathcal{P}) - \{q\}$ is homeomorphic to $\R$, $\#\partial_{\infty}(Q) = 2$ and the only compactification of $\R$ with two points is homeomorphic to the closed $1$-ball, it follows that $X_{q}$ is homeomorphic to the $1$-ball. Thus, by \textbf{Proposition \ref{blowupsierpisnki}} the space $\partial_{B}(G,\mathcal{Q})$ is a $0$-dimensional Sierpi\'nski carpet (i.e. a Cantor set).

Suppose $n = 2$.

Let $Q \in \mathcal{P} - \mathcal{Q}$ be the stabilizer of a bounded parabolic point $q$. The action of $Q$ on $\partial_{B}(G,\mathcal{P})-\{q\}$ is properly discontinuous, which implies that it has a finite kernel $K$. The quotient $R = Q/K$ has an induced action on $\partial_{B}(G,\mathcal{P})-\{q\}$ that is faithful. By Theorem 1.3 of \cite{HL}, there are finite index subgroups $Q' < Q$ and $R' < R$ such that the restriction of the quotient map is an isomorphism between $Q'$ and $R'$. Let $\varphi_{q}: Q' \curvearrowright \partial_{B}(G,\mathcal{P})-\{q\}$ be the restriction of the action of $G$ on $\partial_{B}(G,\mathcal{P})$. By the construction of $Q'$, we have that $\varphi_{q}$ is a faithful action (and it is cocompact, since $Q'$ has finite index over $Q$).

By Theorem 0.3 of \cite{Da3} all elements of $\mathcal{P}$ are virtual surface groups. By taking a finite index subgroup, we can suppose that $Q'$ is a surface group (which is torsion-free). So $(\partial_{B}(G,\mathcal{P})-\{q\})/\varphi_{q}$ is a compact $2$-manifold, with fundamental group isomorphic to $Q'$. Since $Q'$ is hyperbolic, then it is the fundamental group of a hyperbolic surface. By the classification of compact surfaces and existence of Riemannian metrics of constant curvature on compact surfaces, there exists a Riemannian metric on $\partial_{B}(G,\mathcal{P})-\{q\}$ with constant negative curvature such that the action $\varphi_{q}$ is by isometries. So $\partial_{\infty}(Q') = \partial_{\infty}(Q)$ is homeomorphic to $S^{1}$ and the hyperbolic compactification of $\partial_{B}(G,\mathcal{P})-\{q\}$ is a closed $2$-ball. But such compactification is $X_{q}$ (of the construction of the blowup space) since the functor induced by the action of $Q$ preserves hyperbolic compactifications, by the \textbf{Lemma \ref{finiteindexhyp}}.

Suppose $n = 3$.

Let $Q \in \mathcal{P} - \mathcal{Q}$ be the stabilizer of a bounded parabolic point $q$, $Q'$ a finite index torsion-free subgroup of $Q$ and $\varphi_{q}: Q' \curvearrowright \partial_{B}(G,\mathcal{P})-\{q\}$ the restriction of the action of $G$ on $\partial_{B}(G,\mathcal{P})$. Since $Q'$ is torsion-free, $\varphi_{q}$ is a covering action (and it is already cocompact).

Since $\partial_{B}(G,\mathcal{P})-\{q\}$ is connected and locally connected and $\varphi_{q}$ is a covering action, there exists, by Proposition 6.3 of \cite{GM}, a metric $d$ on the space $\partial_{B}(G,\mathcal{P})-\{q\}$ such that the metric space is proper and geodesic and $\varphi_{q}$ is an action by isometries. Since $Q'$ is hyperbolic, then $\partial_{B}(G,\mathcal{P})-\{q\}$ is quasi-isometric to $Q'$ and then it is also metric hyperbolic.

The quotient space $(\partial_{B}(G,\mathcal{P})-\{q\})/\varphi_{q}$ is a compact $3$-manifold, with fundamental group isomorphic to $Q'$ and irreducible (i.e. every locally flat $2$-sphere in it bounds a closed $3$-ball), since its universal cover, $\partial_{B}(G,\mathcal{P})-\{q\} \cong \R^{3}$, is irreducible (Proposition 1.6 of \cite{Ha} and the Generalized Schoenflies Theorem \cite{Br}). So, by Theorem 4.1 of \cite{BM}, $\partial_{\infty}(Q') = \partial_{\infty}(Q)$ is homeomorphic to $S^{2}$ and the hyperbolic compactification of $\partial_{B}(G,\mathcal{P})-\{q\}$ is a closed $3$-ball. But such compactification is $X_{q}$ since the functor induced by the action of $Q$ preserves hyperbolic compactifications (again, by the \textbf{Lemma \ref{finiteindexhyp}}).

Suppose that $n = 4$.

Let $Q \in \mathcal{P} - \mathcal{Q}$ be the stabilizer of a bounded parabolic point $q$ and $\varphi_{q}: Q \curvearrowright \partial_{B}(G,\mathcal{P})-\{q\}$ the restriction of the action of $G$ on $\partial_{B}(G,\mathcal{P})$. Since $Q$ is torsion-free, $\varphi_{q}$ is a covering action (and it is already cocompact).

Since $\varphi_{q}$ is a covering action, properly discontinuous and cocompact, the quotient space $(\partial_{B}(G,\mathcal{P})-\{q\})/\varphi_{q}$ is a compact $4$-manifold, with fundamental group isomorphic to $Q$. Its universal cover $\partial_{B}(G,\mathcal{P})-\{q\} \cong \R^{4}$ is contractible, which implies that $(\partial_{B}(G,\mathcal{P})-\{q\})/\varphi_{q}$ is a $K(Q,1)$ space. We also have that $M_{Q}/Q$ is a compact manifold and a $K(Q,1)$ space. Since the Borel Conjecture holds for $Q$, the spaces $(\partial_{B}(G,\mathcal{P})-\{q\})/Q$ and $M_{Q}/Q$ are homeomorphic. So there exists a Riemannian metric on $\partial_{B}(G,\mathcal{P})-\{q\}$ which have negatively bounded curvature and the action  $\varphi_{q}$ is by isometries (we can do it in a way that $\partial_{B}(G,\mathcal{P})-\{q\}$ and $M_{Q}$ are isometric). Then the hyperbolic compactification of $\partial_{B}(G,\mathcal{P})-\{q\}$ is a closed $4$-ball. But
such compactification is $X_{q}$ since the functor induced by the action of $Q$ preserves hyperbolic compactifications.

In any of those cases it follows from \textbf{Proposition \ref{blowupsierpisnki}} that the space $\partial_{B}(G,\mathcal{Q})$ is a $n-1$-dimensional Sierpi\'nski carpet.

Suppose $n \geqslant 5$.

Let $Q \in \mathcal{P} - \mathcal{Q}$. Since it is hyperbolic, it has an $E\mathcal{Z}$-structure given by its Rips complex $P_{d}$, for a big enough $d > 0$, compactified with its hyperbolic boundary (Theorem 1.2 of \cite{BM}). This compactification is in correspondence with $Q+_{\partial_{\infty}}\partial_{\infty}(Q)$ since the functor induced by the action of $Q$ preserves hyperbolic compactifications. Then, by the last proposition, the blowup $X$ is homeomorphic to the $n-1$-dimensional Sierpi\'nski carpet. But $X = \partial_{B}(G,\mathcal{Q})$.
\end{proof}

\begin{obs}Theorem 1.3 of \cite{HL} says only that there are finite index subgroups $Q' < Q$ and $R' < R$ that are isomorphic. But this isomorphism comes from the restriction of the quotient map, as seen on its proof. In the statement of Theorem 1.3 of \cite{HL} it is missing the hypothesis of the map being surjective, which is the case that we needed above.
\end{obs}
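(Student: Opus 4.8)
The statement to be justified is the assertion in the remark that the isomorphism between the finite-index subgroups $Q' < Q$ and $R' < R$ supplied by Theorem 1.3 of \cite{HL} is in fact the restriction of the quotient map $\pi : Q \to R = Q/K$, together with the observation that surjectivity of $\pi$ is precisely the hypothesis that forces $R'$ to have finite index in $R$. The plan is to isolate the purely group-theoretic content that the application in the $n=2$ case actually uses and to verify it directly, so that the citation is legitimate even though the printed statement of Theorem 1.3 omits surjectivity.

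First I would record the elementary consequence of surjectivity that governs the index of the image. If $\pi : Q \to R$ is onto and $Q' \leqslant Q$ has finite index, then $R' := \pi(Q')$ has finite index in $R$, with $[R : \pi(Q')] \leqslant [Q : Q']$: a left transversal of $Q'$ in $Q$ maps to a family of elements whose $\pi(Q')$-cosets cover all of $R = \pi(Q)$, which bounds the index. This is exactly where surjectivity enters. For a map that is merely finite-kernel but not onto, one controls only the index of $\pi(Q')$ inside $\pi(Q)$, so the conclusion that $R'$ is finite index in $R$ can fail unless $\pi(Q)$ is itself finite index; this is the gap in the bare statement of Theorem 1.3 that the remark flags, and it is harmless in our setting because the quotient map is surjective by construction.

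Next I would secure injectivity of $\pi|_{Q'}$ by arranging $Q' \cap K = \{1\}$. Since $K$ is finite, $K \setminus \{1\}$ is a finite set, and because the peripheral groups here are virtually surface groups by Theorem 0.3 of \cite{Da3}, and hence residually finite, each nontrivial $k \in K$ is separated from the identity by some finite-index subgroup of $Q$; the intersection of these finitely many subgroups is a finite-index subgroup $Q'$ meeting $K$ trivially. Then $\ker(\pi|_{Q'}) = Q' \cap K = \{1\}$, so $\pi|_{Q'} : Q' \to R' = \pi(Q')$ is an injective homomorphism onto a finite-index subgroup, i.e.\ an isomorphism realized by the restriction of the quotient map, which is exactly the form of the isomorphism the remark asserts.

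Finally I would check that this matches what the proof of Theorem 1.3 of \cite{HL} produces rather than merely what its statement records: the isomorphism it constructs is induced by the given homomorphism between the two groups, not an abstract coincidence of isomorphism types, and the only step requiring more than a finite-kernel map is the passage from $\pi(Q)$ to $R$, which surjectivity furnishes. The main obstacle will be the separability step, since injectivity on a finite-index subgroup genuinely needs the nontrivial elements of $K$ to be separated from the identity; in full generality this is a residual-finiteness hypothesis, but in the present situation it is guaranteed by the virtual surface group structure of the groups in $\mathcal{P}$.
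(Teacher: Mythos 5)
Your proposal is correct mathematics, but it takes a genuinely different route from the paper. The remark in the paper is not proved by an argument at all: it is a bibliographic clarification, justified simply by inspecting the proof of Theorem 1.3 of \cite{HL} and observing (i) that the isomorphism constructed there is the restriction of the quotient map, and (ii) that the surjectivity hypothesis omitted from the printed statement is what makes $R'=\pi(Q')$ have finite index in $R$. You instead reprove the needed special case from scratch: surjectivity gives $[R:\pi(Q')]\leqslant [Q:Q']$, and residual finiteness of $Q$ --- available because the peripheral groups are virtual surface groups by Theorem 0.3 of \cite{Da3}, and a group with a residually finite subgroup of finite index is itself residually finite --- produces a finite-index $Q'\leqslant Q$ with $Q'\cap K=\{1\}$, so that $\pi|_{Q'}$ is an isomorphism onto $R'$ realized by the quotient map, which is exactly the form the $n=2$ argument uses (faithfulness of $\varphi_{q}$). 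This is sound, and there is no circularity within the paper's logic, since Dahmani's theorem is invoked independently of \cite{HL}; your argument would in fact let the $n=2$ case of Theorem \ref{sierpinskihyp} dispense with the citation of \cite{HL} entirely. What your route buys is self-containedness; what it costs is generality, and you correctly flag this yourself: the residual-finiteness input is essential, since a finite-kernel surjection onto even a residually finite group need not admit such a lift (Deligne's example --- a central extension of $\mathrm{Sp}(4,\Z)$ by a finite cyclic group whose kernel lies in every finite-index subgroup --- shows this), so \cite{HL} must be using structural properties of the target rather than a bare finiteness hypothesis. The one thing your proposal cannot actually deliver blind is its final paragraph: the claim that your isomorphism ``matches what the proof of Theorem 1.3 of \cite{HL} produces'' is precisely the content of the remark's inspection of that proof, and you assert it rather than verify it; fortunately, nothing in your self-contained argument depends on that claim, and your identification of surjectivity as the missing hypothesis, with the infinite-index-image failure mode, agrees exactly with the point the remark records.
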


\begin{cor}\label{circleimpliesvirtuallyfree}Let $(G,\mathcal{P})$ be a relatively hyperbolic pair with Bowditch boundary homeomorphic to $S^{1}$ and such that $\mathcal{P}$ is not empty. Then $G$ is virtually free.
\end{cor}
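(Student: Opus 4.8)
The plan is to apply \textbf{Theorem \ref{sierpinskihyp}} in the case $n=1$ with the degenerate peripheral structure $\mathcal{Q} = \emptyset$, and then to invoke the classical fact that a word-hyperbolic group with totally disconnected boundary is virtually free.

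First I would verify the hypotheses of the theorem for the choice $\mathcal{Q}=\emptyset$. Since $\mathcal{P}$ is nonempty, the empty set is a proper subset of $\mathcal{P}$, so $\mathcal{Q}=\emptyset$ is an admissible choice. It then remains to check that every group in $\mathcal{P} - \mathcal{Q} = \mathcal{P}$ is hyperbolic. This is exactly the observation recorded in the remark following the theorem: if $p$ is a bounded parabolic point with stabilizer $P$, then $P$ acts properly discontinuously and cocompactly on $\partial_{B}(G,\mathcal{P}) - \{p\} \cong S^{1}-\{p\} \cong \R$, and a group admitting such an action on $\R$ is two-ended, hence hyperbolic. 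Thus all of case (1) of \textbf{Theorem \ref{sierpinskihyp}} applies.

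Applying the theorem yields that $\partial_{B}(G,\mathcal{Q}) = \partial_{B}(G,\emptyset)$ is a $0$-dimensional Sierpi\'nski carpet, that is, a Cantor set. But relative hyperbolicity with respect to the empty family is precisely word-hyperbolicity: with $\mathcal{Q}=\emptyset$ there are no parabolic points, so every limit point is conical, which is the condition of \textbf{Definition \ref{hypgroup}}. Hence $G$ is a hyperbolic group and $\partial_{B}(G,\emptyset) = \partial_{\infty}(G)$, so $G$ is a hyperbolic group whose hyperbolic boundary is a Cantor set.

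The final step, which I expect to be the only part requiring input external to the theorem, is to conclude that a hyperbolic group with Cantor set boundary is virtually free. This rests on the classical combination of Stallings' ends theorem (a finitely generated group with infinitely many ends splits over a finite subgroup) with Dunwoody's accessibility theorem: an infinite totally disconnected boundary forces $G$ to have infinitely many ends and to admit an accessible terminal decomposition all of whose vertex groups are finite, whence $G$ is virtually free. This is where the \emph{well-known} content of the corollary genuinely lies; the role of \textbf{Theorem \ref{sierpinskihyp}} is simply to supply, by the independent blowup route, the hypothesis that $\partial_{\infty}(G)$ is a Cantor set.
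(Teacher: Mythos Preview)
Your proposal is correct and follows essentially the same route as the paper: verify that each peripheral subgroup is two-ended (hence hyperbolic), apply \textbf{Theorem \ref{sierpinskihyp}} with $\mathcal{Q}=\emptyset$ to obtain a Cantor set boundary, and then invoke the classical fact that a hyperbolic group with Cantor set boundary is virtually free. The only cosmetic differences are that the paper cites \textbf{Proposition \ref{blowupconv}} directly for the hyperbolicity of $G$ and quotes 7.A.1 of \cite{Gr} rather than spelling out Stallings--Dunwoody for the final step.
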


\begin{obs}This corollary is well known.
\end{obs}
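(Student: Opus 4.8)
The plan is to deduce the corollary directly from Theorem \ref{sierpinskihyp} in the case $n = 1$, applied to the coarsest possible refinement of the peripheral structure. Since $\mathcal{P}$ is nonempty, the empty family $\mathcal{Q} = \emptyset$ is a proper subset of $\mathcal{P}$, so I first verify the standing hypothesis of the theorem, namely that every group in $\mathcal{P} - \mathcal{Q} = \mathcal{P}$ is hyperbolic. This is exactly the elementary observation recorded in the remark following the theorem: if $P \in \mathcal{P}$ is the stabilizer of a bounded parabolic point $p$, then $P$ acts properly discontinuously and cocompactly on $\partial_{B}(G,\mathcal{P}) - \{p\} \cong S^{1} - \{p\} \cong \R$, so $P$ is two-ended, hence virtually cyclic, hence hyperbolic with $\partial_{\infty}(P)$ a two-point space. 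With $n = 1$ this places us squarely in case (1) of the theorem.

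The theorem then yields that $\partial_{B}(G,\mathcal{Q}) = \partial_{B}(G,\emptyset)$ is a $0$-dimensional Sierpi\'nski carpet, that is, a Cantor set. The step I would emphasize is the correct reading of this boundary: being relatively hyperbolic with respect to the empty family means having no bounded parabolic points, so every limit point is conical and the action is hyperbolic in the sense of \textbf{Definition \ref{hypgroup}}. Concretely, the blowup $X$ that replaces each parabolic point $p$ by the two-point fiber $\partial_{\infty}(P)$ is, by part (2) of \textbf{Proposition \ref{blowupconv}}, the hyperbolic boundary of $G$; thus $G$ is a word-hyperbolic group and $X = \partial_{\infty}(G)$ is a Cantor set.

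It then remains to pass from the topology of the boundary to the algebra of $G$. A word-hyperbolic group whose Gromov boundary is totally disconnected and infinite is virtually free: such a boundary forces $G$ to have infinitely many ends, so Stallings' theorem gives a nontrivial splitting over a finite subgroup, and Dunwoody's accessibility theorem upgrades this to a decomposition of $G$ as the fundamental group of a finite graph of finite groups, whence $G$ is virtually free. Applying this to our $G$, whose boundary is a Cantor set, completes the proof. I expect the only genuinely delicate point to be the bookkeeping in the first two paragraphs: one must check that the two-point fiber $\partial_{\infty}(P)$ is treated as the legitimate hyperbolic boundary of a two-ended group (cf. the convention excluding two-point convergence actions unless the group is virtually cyclic acting on its ends), so that \textbf{Proposition \ref{blowupconv}(2)} genuinely applies and identifies the blowup with $\partial_{\infty}(G)$. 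Everything else is either an invocation of the main theorem or the classical Stallings--Dunwoody characterization of virtually free groups.
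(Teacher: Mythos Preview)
Your proof is correct and follows essentially the same route as the paper: verify that the peripherals are two-ended (hence hyperbolic), apply \textbf{Theorem \ref{sierpinskihyp}} with $\mathcal{Q}=\emptyset$ in the case $n=1$ to obtain a Cantor-set boundary, identify this boundary with $\partial_{\infty}(G)$ via \textbf{Proposition \ref{blowupconv}}, and conclude that $G$ is virtually free. The only cosmetic difference is in the last step: the paper cites 7.A.1 of \cite{Gr} directly, whereas you spell out the Stallings--Dunwoody argument behind it.
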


\begin{proof}Since every group in $\mathcal{P}$ is hyperbolic, then $G$ is hyperbolic (\textbf{Proposition \ref{blowupconv}}). It follows from the last theorem that $\partial_{\infty}(G)$ is a Cantor set. Thus, by 7.A.1 of \cite{Gr}, $G$ is virtually free.
\end{proof}

Examples as the blowups given in \textbf{Theorem \ref{sierpinskihyp}} appear in every dimension in the base:

\begin{ex}Let $M$ be a compact real hyperbolic $n+1$-manifold with non-empty totally geodesic boundary and $n \neq 4$. We have that its fundamental group $G = \pi_{1}(M,p)$ is hyperbolic and $\partial_{\infty}(G)$ is a $n-1$-dimensional Sierpi\'nski carpet (Theorem 1.1 of \cite{La}). Let $\{C_{i}\}_{i \in \N}$ be the set of the $n-1$-spheres in $\partial_{\infty}(\pi_{1}(M,p))$ that appears from the boundaries of the balls removed from $S^{n}$ on the standard definition of the $n-1$-dimensional Sierpi\'nski carpet and $\mathcal{P}$ a set of conjugation classes of the stabilizers of the sets $\{C_{i}\}_{i \in \N}$. Let $P \in \mathcal{P}$ be the stabilizer of the sphere $C$. We have that $C$ is $P$-invariant,  $\mathcal{P}$ is finite (Proposition 3.4 of \cite{Da3}), $C$ is the limit set of $P$ (Lemma 3.5 of \cite{Da3}) and $P$ is a quasi-convex subgroup of $G$ (Proposition 3.7 of \cite{Da3})  (all proofs work in every dimension). Since the $n-1$-spheres in $\{C_{i}\}_{i \in \N}$ are pairwise disjoint, we have that the family $\mathcal{P}$ is almost malnormal (i.e. $\forall P,P' \in \mathcal{P}$, $\forall g \in G$, $P = gP'g^{-1}$ or $P \cap gP'g^{-1} = \emptyset$). If $g \in G$, the limit set of $ gPg^{-1}$ is the limit set of $gP$ (using the perspectivity property) and the limit set of $gP$ is $gC$, which implies that the limit set of $ gPg^{-1}$ is $gC$  and then $gPg^{-1} = P$ if and only if $g \in P$ (in another words, $P$ is its own normalizer). Then $(G,\mathcal{P})$ is a relatively hyperbolic pair (Theorem 7.11 of \cite{Bo4}) with Bowditch boundary given by the quotient of $\partial_{\infty}(G)$ when we collapse all spheres in $\{C_{i}\}_{i \in \N}$ (Main Theorem of \cite{Tr}), which is a $n$-sphere (Decomposition Theorem - Theorem 2 of \cite{Me}). So this quotient map $\partial_{\infty}(G) \rightarrow \partial_{B}(G,\mathcal{P})$ is an example of a blowup map.

In dimension $n = 2$, the existence of examples of hyperbolic  groups with hyperbolic boundary homeomorphic to the $1$-dimensional Sierpi\'nski carpet that are not virtually the fundamental group of a compact hyperbolic $3$-manifold with non-empty totally geodesic boundary is the negation of the Kapovich-Kleiner Conjecture \cite{KK} and it is still open.
\end{ex}

If the Kapovich-Kleiner Conjecture is true, then we have the following proposition which is similar to the Relative Cannon Conjecture:

\begin{cor}\label{conjecturedependent}Let $(G,\mathcal{P})$ be a relatively hyperbolic pair with Bowditch boundary homeomorphic to $S^{2}$ and such that $\mathcal{P}$ is not empty and every group in $\mathcal{P}$ is hyperbolic. If the Kapovich-Kleiner Conjecture holds, then $G$ is virtually the fundamental group of a compact hyperbolic $3$-manifold with non-empty totally geodesic boundary. \eod
\end{cor}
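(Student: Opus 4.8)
The plan is to deduce this as an immediate consequence of \textbf{Theorem \ref{sierpinskihyp}} together with the (conjectural) Kapovich-Kleiner statement. First I would apply the theorem with $\mathcal{Q} = \emptyset$. Since $\mathcal{P}$ is not empty, $\emptyset$ is a proper subset of $\mathcal{P}$, and every group in $\mathcal{P} - \mathcal{Q} = \mathcal{P}$ is hyperbolic by hypothesis. Thus the case $n = 2$ of the theorem applies and yields that $\partial_{B}(G,\emptyset)$ is homeomorphic to the $1$-dimensional Sierpi\'nski carpet.

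Next I would identify $\partial_{B}(G,\emptyset)$ with the hyperbolic boundary $\partial_{\infty}(G)$. Relative hyperbolicity with respect to the empty peripheral family is ordinary hyperbolicity, and this is precisely what \textbf{Proposition \ref{blowupconv}}(2) provides: blowing up the $S^{2}$ Bowditch boundary using, as fibers, the (minimal, hyperbolic) actions of the bounded parabolic stabilizers on their hyperbolic boundaries produces a hyperbolic action $\psi$ with $X \cong \partial_{\infty}(G)$. Hence $G$ is a hyperbolic group whose hyperbolic boundary is homeomorphic to the $1$-dimensional Sierpi\'nski carpet.

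Finally, assuming the Kapovich-Kleiner Conjecture, a hyperbolic group with hyperbolic boundary homeomorphic to the $1$-dimensional Sierpi\'nski carpet is virtually the fundamental group of a compact hyperbolic $3$-manifold with non-empty totally geodesic boundary. Since $G$ is such a group, this is exactly the desired conclusion.

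The argument is essentially immediate once \textbf{Theorem \ref{sierpinskihyp}} is available, so there is no serious obstacle internal to this corollary. The only points requiring any care are the verification that all hypotheses of the theorem hold with the choice $\mathcal{Q} = \emptyset$ and the identification $\partial_{B}(G,\emptyset) = \partial_{\infty}(G)$; the genuine mathematical difficulty is entirely offloaded onto the conjectural Kapovich-Kleiner statement, which is why the corollary is stated conditionally.
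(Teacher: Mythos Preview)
Your proposal is correct and matches the paper's intended approach: the corollary is stated with an \eod and no written proof, precisely because it follows at once from \textbf{Theorem \ref{sierpinskihyp}} (with $\mathcal{Q}=\emptyset$, $n=2$), the identification $\partial_{B}(G,\emptyset)=\partial_{\infty}(G)$ via \textbf{Proposition \ref{blowupconv}}(2), and the Kapovich--Kleiner Conjecture. There is nothing to add.
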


\end{document}